\documentclass[12pt,oneside]{amsart}
\usepackage{calc,geometry, verbatim, graphicx, amssymb, color, mathabx, mathtools, enumitem, bm, mathrsfs, amsmath, amsthm, ulem, xspace, mathtools, multirow, array, adjustbox, physics, bm}
\usepackage[usenames,dvipsnames,svgnames,table]{xcolor}
\usepackage[pdftex,bookmarks,colorlinks,breaklinks]{hyperref}  
\hypersetup{linkcolor=blue,citecolor=red,filecolor=dullmagenta,urlcolor=darkblue} 
\geometry{margin=1 in}
\newtheorem{theorem}{Theorem}[section]
\newtheorem{corollary}[theorem]{Corollary}

\newtheorem{conjecture}[theorem]{Conjecture}
\newtheorem{remark}[theorem]{Remark}
\newtheorem{example}[theorem]{Example}
\newtheorem{question}[theorem]{Problem} 
\newcommand{\Mod}[1]{\ (\mathrm{mod}\ #1)}

\newcommand\mult{\operatorname{\textup{{\fontfamily{ptm}\selectfont mult}}}}
\newcommand\dg{\operatorname{\textup{{\fontfamily{ptm}\selectfont deg}}}}

\newcommand{\cc}{\mathcal}

\newcommand\roundup[1]{\left\lceil#1\right\rceil}
\newcommand\rounddown[1]{\left\lfloor#1\right\rfloor}

      \makeatletter
      \def\@setcopyright{}
      \def\serieslogo@{}
      \makeatother      
\begin{document}
   \author{Amin Bahmanian}
   \address{Department of Mathematics,
  Illinois State University, Normal, IL USA 61790-4520}
   \author{Anna Johnsen}
  \address{Department of Mathematics,
  Illinois State University, Normal, IL USA 61790-4520}
  \email{anna.johnsen@gmail.com}

\title[Embedding Irregular Colorings into Connected Factorizations]{Embedding Irregular Colorings into Connected Factorizations}

   \begin{abstract}  
For $\vb r:=(r_1,\dots,r_k)$, an {\it $\vb r$-factorization} of the complete $\lambda$-fold $h$-uniform $n$-vertex hypergraph $\lambda K_n^h$ is a partition of (the edges of) $\lambda K_n^h$ into $F_1,\dots, F_k$ such that for $i=1,\dots,k$,  $F_i$ is $r_i$-regular and spanning. Suppose that $n \geq (h-1)(2m-1)$. Given a partial $\vb r$-factorization of $\lambda K_m^h$, that is, a coloring (i.e. partition) $P$ of the edges of  $\lambda K_m^h$ into $F_1,\dots, F_k$ such that for $i=1,\dots,k$,  $F_i$ is  spanning and the degree of each vertex in $F_i$ is at most $r_i$, we find necessary and sufficient conditions that ensure $P$ can be extended to a connected $\vb r$-factorization of $\lambda K_n^h$ (i.e.  an $\vb r$-factorization in which each factor is connected). Moreover, we  prove a general result that implies the following. Given a partial $\vb s$-factorization $P$ of {\it any} sub-hypergraph of  $\lambda K_m^h$, where $\vb s:=(s_1,\dots,s_q)$ and $q$ is not too big, we find necessary and sufficient conditions under which $P$ can be embedded into  a connected $\vb r$-factorization of $\lambda K_n^h$. These results can be seen as  unified generalizations of various classical combinatorial results such as Cruse's theorem on embedding partial symmetric latin squares, Baranyai's theorem on factorization of hypergraphs, Hilton's theorem on extending path decompositions into Hamiltonian decompositions, H\"{a}ggkvist and Hellgren's theorem on extending 1-factorizations, and Hilton, Johnson, Rodger, and Wantland's theorem on embedding connected factorizations.

   \end{abstract}
   \subjclass[2010]{05C70, 05C65, 05C15, 05C40, 05C60, 05B40, 05E05}
   \keywords{Ryser's theorem, embedding, amalgamation, latin squares, factorization, edge-coloring, connectivity, detachment, hypergraphs}
   \date{\today}

   \maketitle

\section{Background and  Statement of the Main Results} 
Nearly seventy years ago, Ryser \cite{MR42361} found conditions that ensure  an $r\times s$ latin rectangle can be embedded into an $n\times n$ latin square.  Cruse \cite{MR0329925} provided a symmetric analogue of Ryser's theorem by finding conditions under which an  $r\times r$ symmetric latin rectangle can be embedded into an $n\times n$ symmetric  latin square. In graph theoretic terms, these two results are equivalent to finding conditions under which a proper edge-coloring of the complete bipartite graph $K_{r,s}$ and  the complete graph $\mathbb K_r$ (this is the complete graph $K_r$ with a loop on each vertex) can be extended to a one-factorization of $K_{n,n}$ and  $\mathbb K_n$, respectively.  
The goal of this paper is to explore  higher dimensional analogues of such classical results. To elaborate, let us provide some definitions first.

Let $\cc G$ be a hypergraph with vertex set $V$ and edge (multi)set $E$. We allow edges of $\cc G$ to contain multiple copies of any vertex $v\in V$. If  every vertex  degree in $\cc G$ is exactly  $r$, then $\cc G$ is said to be {\it  $r$-regular}, and if none of the components of $\cc G$ is $r$-regular, then $\cc G$ is said to be {\it $r$-irregular}.  A {\it  $k$-coloring} of $\cc G$ is a mapping $f: E \rightarrow [k]$ where $[k]:=\{1,\dots, k\}$. Any $k$-coloring is also a $(k+\ell)$-coloring for $\ell\geq 1$. For $\vb r=(r_1,\dots,r_k)$, an {\it $\vb r$-factorization} of $\cc G$ is a $k$-coloring of $\cc G$ where for $i\in [k]$, color class  $i$, written $\cc G(i)$, induces  an $r_i$-regular spanning sub--hypergraph of $\cc G$; that is, an {\it $r_i$-factor} of $\cc G$. We say that a hypergraph $G$ is {\it connected} if for any two vertices $v, w \in V$, there exists a sequence of edges $e_1, e_2, ..., e_s$ in the edge set of $G$ such that $v\in e_1, w\in e_s$, and $e_i\cap e_j \neq \emptyset$ for all $i,j\in[s]$. 

Let $\lambda K_n^h$ be the $n$-vertex hypergraph whose edge multiset $E$ is  the collection of all $h$-subsets of the vertex set, each $h$-subset occurring exactly $\lambda$ times in $E$. Given a 1-factorization of $K_n^h$, if you think of the set $V$ of vertices  as the set of points, the set $E$ of edges as the set of lines, and 1-factors  as parallel classes, then  for every point $v \in V$ and for each line $\ell$ in $E$, there is exactly another line $\ell'$ which is parallel to $\ell$ (that is, contained in the same parallel class as $\ell$) and contains $v$. Hence, a 1-factorization is sometimes called  a {\it parallelism}.

For $\vb r:=(r_1,\dots,r_k)$, a {\it partial $\vb r$-factorization} of $\cc G$ is a $k$-coloring of $\cc G$ where for $i\in [k]$, color class  $i$, $\cc G(i)$,  induces a spanning sub-hypergraph of $\cc G$ in which the degree of each vertex is at most $r_i$. Whenever  $\vb r=(r,\dots,r)$, we replace $\vb r$ by $r$; for example, an $r$-factorization  is an  $\vb r$-factorization with $\vb r=(r,\dots,r)$. A partial 1-factorization is often called a {\it proper coloring}. 
Suppose that a partial $\vb r$-factorization $P$ of $\lambda K_m^h$ is extended to an $\vb r$-factorization of $\lambda K_n^h$. For $i\in[k]$, the existence of an $r_i$-factor in $\lambda K_n^h$ implies that $h \divides r_in$. Since the degree of each vertex in $\lambda K_n^h$ is $\lambda \binom{n-1}{h-1}$, in order to extend $P$ to an $\vb r$-factorization of $\lambda K_n^h$, the following conditions are necessary.
\begin{align} \label{divcond+h1}
    &&
    h \mid r_i n \quad \forall i\in[k],
    &&
    \sum_{i=1}^k r_i = \lambda\binom{n-1}{h-1}.
    &&
  \end{align} 
We  show that as long as $n$ satisfies a reasonable lower bound, these obvious necessary (divisibility) conditions are  sufficient.  A quadruple $(n,h,\lambda, \vb r)$ is {\it admissible} if it satisfies  \eqref{divcond+h1}.  Here is our first result which settles a conjecture of \cite{MR3915188}.
\begin{theorem}\label{thm:general}
 For $n \geq (h-1)(2m-1)$, a partial $\vb r$-factorization of $\lambda K_m^h$ can be extended to an $\vb r$-factorization of $\lambda K_n^h$ if and only if  $(n,h,\lambda, \vb r)$ is admissible. 
\end{theorem}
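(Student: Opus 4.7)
The necessity of \eqref{divcond+h1} was already observed in the paragraph preceding the theorem, so I focus on sufficiency. My plan is the amalgamation--detachment paradigm signalled by the paper's keywords. The first move is to build an auxiliary hypergraph $\mathcal H$ on the $m+1$ vertices $[m]\cup\{\alpha\}$ obtained from the (not yet constructed) $\lambda K_n^h$ by identifying the $n-m$ ``outside'' vertices into a single super-vertex $\alpha$. Then the edges of $\mathcal H$ lying inside $[m]$ form exactly $\lambda K_m^h$ (the host of the partial coloring $P$), while for each $j\in[h]$ and each $(h-j)$-subset $S\subseteq[m]$ the edge consisting of $S$ together with $j$ copies of $\alpha$ appears in $\mathcal H$ with multiplicity $\lambda\binom{n-m}{j}$. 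In particular $\deg_{\mathcal H}(\alpha)=(n-m)\lambda\binom{n-1}{h-1}$, and the hypothesis $n\ge(h-1)(2m-1)$ rules out any single $\alpha$-incident edge dominating the rest.

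The next step is to extend $P$ to a $k$-coloring $C$ of all of $\mathcal H$ such that, for every $i\in[k]$, color class $i$ has degree exactly $r_i$ at every $v\in[m]$ and degree exactly $(n-m)r_i$ at $\alpha$. The admissibility conditions are tailored so that these prescribed degrees are globally consistent: each $v\in[m]$ has total degree $\lambda\binom{n-1}{h-1}=\sum_i r_i$ in $\mathcal H$, the total degree at $\alpha$ is $\sum_i(n-m)r_i=(n-m)\lambda\binom{n-1}{h-1}$, and $h\mid r_in$ provides the integrality needed when tallying edges per color. Constructing $C$ then reduces to a balanced ``deficiency'' completion problem on an auxiliary bipartite structure (one side indexing vertices of $[m]$, the other side indexing the $\alpha$-incident edges, with color slots playing the role of supplies and demands), which can be settled by a standard Hall- or Baranyai-type argument.

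With $(\mathcal H, C)$ in hand, I would apply a hypergraph detachment theorem (a descendant of Nash-Williams's graph detachment theorem, extended to the uniform-hypergraph setting of this literature) to split $\alpha$ into $n-m$ new vertices $v_{m+1},\dots,v_n$ so that (i)~each new vertex has degree $r_i$ in color $i$ for every $i$, (ii)~each edge of $\mathcal H$ containing $\alpha$ with multiplicity $j$ is realized by some $j$-subset of $\{v_{m+1},\dots,v_n\}$, and (iii)~the resulting hypergraph on $[n]$ is exactly $\lambda K_n^h$. The quantitative hypothesis $n\ge(h-1)(2m-1)$ enters precisely as the threshold that makes the detachment admissible: it is the bound under which the edge multiplicities at $\alpha$ are small enough relative to $n-m$ to be spread evenly across the new vertices while producing each $h$-subset of $[n]$ exactly $\lambda$ times.

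The decisive obstacle is this last step. Invoking (or proving ad hoc) a detachment theorem that simultaneously preserves the coloring's exact regularity at every vertex \emph{and} produces the prescribed final edge multiplicities is the technical engine of the argument; checking that the stated bound $n\ge(h-1)(2m-1)$ suffices to verify all of its hypotheses is where the careful counting will live. Step~2 is more routine but still requires care, since the deficiencies $r_i-\deg_{F_i}(v)$ must be distributed coherently across $\alpha$-incident edges of different $\alpha$-multiplicities while respecting the $\alpha$-side constraint $(n-m)r_i$ for each color.
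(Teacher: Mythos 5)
Your overall architecture --- amalgamate the $n-m$ outside vertices into a single vertex $\alpha$, color the resulting hypergraph $\mathcal H$ so that every $v\in[m]$ has degree exactly $r_i$ and $\alpha$ has degree $(n-m)r_i$ in each color class $i$, then detach $\alpha$ fairly --- is exactly the paper's, and your description of $\mathcal H$ (multiplicity $\lambda\binom{n-m}{j}$ for each edge $S\cup\{\alpha^j\}$) and of the detachment step (which the paper delegates to the fair-detachment theorem of \cite{MR2942724}) is correct. But there is a genuine gap at the step you call ``more routine'': producing the coloring $C$. You propose to settle it by ``a standard Hall- or Baranyai-type argument'' on a bipartite supply/demand structure, but the edges to be colored are not bipartite in any exploitable sense: an edge $S\cup\{\alpha^j\}$ with $1\le j\le h-2$ contains $h-j\ge 2$ vertices of $[m]$, so assigning it a color consumes degree capacity at several $[m]$-vertices simultaneously, and a Hall-type matching argument does not apply. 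This is precisely where the entire analytic content of the paper lives. The paper colors the $*\alpha^j$-edges greedily in order $j=1,\dots,h-2$, and proves the greedy process never gets stuck: if some edge $X\cup\{\alpha^j\}$ had no admissible color, then every color $i$ would be saturated at some vertex of $X$, forcing $\sum_{i}\sum_{x\in X}\dg_{\mathcal H(i)}(x)\ge\lambda\binom{n-1}{h-1}$, which is then shown to exceed the total number of colored incidences available at $X$ --- a contradiction established by a delicate chain of binomial inequalities (the functions $f$ and $g$ of Section 3). Only after this does the exact degree $r_i$ get achieved, using the $*\alpha^{h-1}$-edges (which touch a single $[m]$-vertex each, so \emph{they} genuinely are a trivial deficiency-filling step) and finally the $\alpha^h$-edges to balance the degree at $\alpha$.

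Relatedly, you have misplaced where the hypothesis $n\ge(h-1)(2m-1)$ is used. The fair detachment theorem needs no lower bound on $n$: since $\mult_{\mathcal H}(X\alpha^j)=\lambda\binom{n-m}{j}$ is exactly divisible by $\binom{n-m}{j}$, the detachment automatically yields every $h$-subset with multiplicity $\lambda$, and the degree condition at the new vertices follows from $\dg_{\mathcal H(i)}(\alpha)=r_i(n-m)$. The bound on $n$ is consumed entirely by the greedy-coloring feasibility argument above (it guarantees, e.g., $\binom{n-m}{h-1}\ge\frac12\binom{n-1}{h-1}$ and the analogous estimates for intermediate $j$). Without supplying that counting argument, your proof does not go through.
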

Since we know little about the structure of the given partial $\vb r$-factorization $P$ of $\lambda K_m^h$ (except for the maximum degree in each color class), it seems quite difficult to find an exact bound for $n$. However, if we put some restrictions on $P$, we may be able to obtain a better bound. A natural restriction to put on $P$ is to assume that  each color class of $P$ is regular. A notable example is the underappreciated  result of H\"{a}ggkvist and Hellgren \cite{MR1249714} proving that a 1-factorization of $K_m^h$ can be embedded into a $1$-factorization of $K_n^h$ if and only if $h$ divides $m$ and $n$, and $n\geq 2m$. This elegant result settled a conjecture of Baranyai and Brouwer \cite{BaranBrouwer77}. Bahmanian and Newman \cite{MR3910877} showed that whenever $\gcd (m,n,h)=\gcd (m,h)$,  an $r$-factorization of $K_m^h$ can be extended to an $r$-factorization of $K_n^h$ if and only if  $n\geq 2m$ and the obvious necessary divisibility conditions are satisfied.  
An implication of regularity of $P$ is that  some colors do not appear in $P$.  Our next result is in  this spirit, but it does not require regularity, and most importantly, it allows many edges of $\lambda K_m^h$ to be uncolored.
\begin{theorem} \label{incompembthm}
Let  $n \geq (h-1)(2m-1)$,  $\vb{s}=(s_1,\dots,s_q), \vb{r}=(r_1,\dots,r_k)$ such that $s_i\leq r_i$ for $1\leq i\leq q\leq k$, and 
\begin{align*} 
    \sum_{i=1}^q \rounddown{\frac{(r_i-s_i)m}{h}}+\sum_{i=q+1}^k \rounddown{\frac{r_im}{h}} \geq \lambda\binom{m}{h}.
\end{align*}
A partial $\vb{s}$-factorization of $\cc G\subseteq\lambda K_m^h$ can be embedded into an $\vb r$-factorization of $\lambda K_n^h$ if and only if $(n,h,\lambda, \vb r)$ is admissible.
\end{theorem}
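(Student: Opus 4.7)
The plan is to reduce Theorem~\ref{incompembthm} to Theorem~\ref{thm:general}. Given the partial $\vb{s}$-factorization $P$ of $\cc G \subseteq \lambda K_m^h$, I will first extend $P$ to a partial $\vb{r}$-factorization $P'$ of the whole hypergraph $\lambda K_m^h$ (treating the colors $1,\dots,q$ of $P$ under the inclusion $[q] \hookrightarrow [k]$ and leaving colors $q+1,\dots,k$ initially empty). Once $P'$ exists, Theorem~\ref{thm:general}, under the bound $n \ge (h-1)(2m-1)$ and the admissibility of $(n,h,\lambda,\vb{r})$, embeds $P'$ into an $\vb{r}$-factorization of $\lambda K_n^h$ whose restriction to $\cc G$ agrees with $P$. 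Necessity of admissibility is inherited directly from Theorem~\ref{thm:general}.

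The technical core is thus an \emph{extension lemma}: the partial coloring $P$ admits an extension to a partial $\vb{r}$-factorization of $\lambda K_m^h$ under the hypotheses. Proving this amounts to coloring every edge of $\cc H := \lambda K_m^h \setminus \cc G$ with some color in $[k]$ so that $d_{P(i)}(v) + d_{H_i}(v) \le r_i$ for every vertex $v$ and color $i$, where $H_i$ is the set of edges newly assigned to color $i$. Writing $c_i(v) := r_i - d_{P(i)}(v)$ (with $d_{P(i)}(v) := 0$ for $i > q$), this is a degree-constrained decomposition $\cc H = H_1 \cup \dots \cup H_k$ with $d_{H_i}(v) \le c_i(v)$. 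Two necessary feasibility conditions hold automatically. Per vertex,
\[
\sum_{i=1}^{k} c_i(v) \;=\; \lambda\binom{n-1}{h-1} - d_{\cc G}(v) \;\ge\; \lambda\binom{m-1}{h-1} - d_{\cc G}(v) \;=\; d_{\cc H}(v).
\]
Globally, since $|E(P(i))| \le s_i m / h$ for $i \le q$, the total room summed over colors is at least $\sum_{i=1}^q \rounddown{(r_i-s_i)m/h} + \sum_{i=q+1}^k \rounddown{r_im/h}$, which by hypothesis is at least $\lambda\binom{m}{h} \ge |E(\cc H)|$.

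The main obstacle is to promote these feasibility conditions into an actual decomposition, since assigning a single hyperedge simultaneously consumes capacity at all $h$ of its incident vertices. My plan is to carry this out either by a deficiency version of Hall's theorem or a network-flow argument on the bipartite incidence between edges of $\cc H$ and capacity slots $(v,i)$, or by an edge-by-edge greedy procedure maintaining the invariant that residual capacities always dominate residual degrees. The slack built into the hypothesis by using the uniform upper bound $s_i$ in place of the pointwise degrees $d_{P(i)}(v)$ is precisely what guarantees that no local exhaustion blocks the procedure, since the actual per-vertex room $r_i - d_{P(i)}(v)$ is often strictly larger than the globally-budgeted $r_i - s_i$. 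Once $P'$ is constructed, a direct invocation of Theorem~\ref{thm:general} yields the desired $\vb{r}$-factorization of $\lambda K_n^h$ extending $P'$ and hence $P$, completing the proof.
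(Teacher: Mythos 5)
Your overall strategy --- first extend the given partial $\vb{s}$-factorization of $\cc G$ to a partial $\vb{r}$-factorization of all of $\lambda K_m^h$, then invoke Theorem \ref{thm:general} --- is exactly the paper's reduction, and your remark that one should target the uniform per-vertex bound $r_i-s_i$ for the newly colored edges (rather than the pointwise room $r_i-\dg_{P(i)}(v)$) is the right idea. The gap is that you never actually construct this extension: you offer a menu of techniques (a deficiency form of Hall's theorem, a network flow on the incidence between edges and capacity slots $(v,i)$, a greedy procedure with a dominance invariant) without carrying out any of them, and none is routine here. Because each hyperedge consumes one unit of capacity at $h$ vertices simultaneously, the assignment problem is not a bipartite matching or flow problem once $h\geq 3$, and the two feasibility conditions you verify (per-vertex capacity and global capacity) are far from sufficient by themselves; a greedy run can exhaust a color's capacity at one vertex of an uncolored edge while ample capacity remains elsewhere, and you do not show that your proposed invariant can be maintained. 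This is the central step of the theorem, and as written it is missing.

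The paper closes exactly this gap with an amalgamation--detachment argument that sidesteps all local constraints. Form a one-vertex hypergraph $\cc H$ on $\{\alpha\}$ with $\lambda\binom{m}{h}$ copies of the edge $\alpha^h$ and color these edges subject only to the global budgets $\mult_{\cc H(i)}(\alpha^h)\leq\rounddown{(r_i-s_i)m/h}$ for $i\in[q]$ and $\mult_{\cc H(i)}(\alpha^h)\leq\rounddown{r_im/h}$ for $i\in[k]\backslash[q]$; the hypothesized floor-sum inequality says precisely that these budgets suffice to color all $\lambda\binom{m}{h}$ edges, and at a single amalgamated vertex there is nothing else to check. The fair detachment theorem \cite[Theorem 4.1]{MR2942724} then splits $\alpha$ into $m$ vertices, producing a copy of $\lambda K_m^h$ in which every vertex has degree at most $\left\lceil h\mult_{\cc H(i)}(\alpha^h)/m\right\rceil\leq r_i-s_i$ in each color $i\in[q]$ (and at most $r_i$ for $i>q$); superimposing this coloring on the edges of $\lambda K_m^h\backslash\cc G$ and combining with $P$ gives the desired partial $\vb{r}$-factorization regardless of the structure of $P$. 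To complete your proof you either need to cite such an equitable-splitting result or prove a statement of comparable strength; the feasibility bookkeeping you have done does not substitute for it.
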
 
Theorem \ref{incompembthm} is  reminiscent  of Evans' theorem \cite{MR122728} that a partial 1-factorization of $F\subseteq K_{m,m}$ using $m$ colors can be extended to a 1-factorization of $K_{n,n}$ whenever $n\geq 2m$.
(In other words, for $n\geq 2m$, an incomplete latin square of order $m$ on $m$ symbols can be embedded in a latin square of order $n$). We remark that Theorem \ref{incompembthm} is new even if we restrict ourselves to the case $h=2$. 

Constructing 1-factorizations dates back to the  18th century (see for example Walecki's construction \cite{MR2394738}  and Sylvester's problem \cite{MR0416986}), but perhaps the first result on embedding factorizations is Ryser's theorem \cite{MR42361}, which sparked research in a wide range of problems in non-associative algebra and design theory \cite{MR1275861}. 
 Using  polyhedral combinatorics, Marcotte and Seymour \cite{MR1073098}  found  conditions that ensure a partial 1-factorization  of a subgraph of a  multiforest $G$ can be extended to  a partial 1-factorization of $G$, which led to a different line of work on embedding partial 1-factorizations \cite{MR4174122, MR4118383, MR1958148, MR3829287, MR4009303, MR3862954}. 

Last but not least, there has been an interest in finding factorizations that meet additional criterion. 
An early example of this  is due to Hilton, who settled conditions that ensure a partial 2-factorization of $K_m$ can be extended to a connected 2-factorization of $K_n$ \cite{MR746544}. Various extensions of this, including those with higher edge-connectivity conditions, can be found in \cite{MR1983358, MR2325799, MR1315436}. Let $m$ be the least common multiple of $h$ and $n$ and let $a=m/h$. Define the set of edges
$$\mathscr K=\{ \{1,\dots,h\}, \{h+1,\dots,2h\},\dots, \{(a-1)h+1,(a-1)h+2,\dots,ah\}\},$$ 
where the elements of the edges are considered mod $n$. The families obtained from $\mathscr K$ by permuting the elements of the underlying set $\{n\}$ are called {\it wreaths}.  A notoriously difficult conjecture of Baranyai and Katona asks for a wreath decomposition of $K_n^h$ \cite{MR1180500}. Observing that wreaths are connected factors (among other things), a small step toward settling this conjecture was to find connected factorizations of $\lambda K_n^h$  \cite{MR3213845}. Our next two results can be used to embed partial wreath decompositions into connected factorizations. 
\begin{theorem} \label{connthmcomb}
In Theorem \ref{thm:general},  for  $i\in [k]$, the $r_i$-factor  $\lambda K_n^h(i)$ is connected if and only if $r_i\geq 2$ and $\lambda K_m^h(i)$ is $r_i$-irregular.
\end{theorem}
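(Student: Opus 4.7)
I would split the argument into the two directions, with the real work concentrated in the reverse direction.

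\textbf{Necessity.} Suppose $\lambda K_n^h(i)$ is connected. If $r_i=0$, the factor is edgeless and so cannot be connected when $n\ge 2$. If $r_i=1$, regularity together with $h\mid n$ forces the factor to consist of $n/h$ vertex-disjoint $h$-edges, and the size hypothesis makes $n/h\ge 2$, giving at least two components. Finally, if some component $C$ of $\lambda K_m^h(i)$ is $r_i$-regular, every vertex of $C$ already carries its full color-$i$ degree; hence no color-$i$ edge of $\lambda K_n^h$ can join $C$ to any other vertex, so $C$ appears as an isolated piece of the factor.

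\textbf{Sufficiency.} Assume that, for each $i$ in some designated set $I\subseteq[k]$, both $r_i\ge 2$ and $\lambda K_m^h(i)$ is $r_i$-irregular. The plan is to revisit the amalgamation/detachment construction behind Theorem~\ref{thm:general}: amalgamate $\{m+1,\dots,n\}$ into a single super-vertex $\alpha$ to obtain a multi-hypergraph $\cc G^*$ on $[m]\cup\{\alpha\}$ in which an original edge containing $t$ new vertices becomes an edge with $t$ copies of $\alpha$. Extend the given coloring to a $k$-coloring of $\cc G^*$ in which color $i$ has degree $r_i$ at every vertex of $[m]$ and degree $r_i(n-m)$ at $\alpha$, and then apply a detachment theorem that splits $\alpha$ back into $n-m$ vertices to recover the desired $\vb r$-factorization of $\lambda K_n^h$.

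To upgrade this construction to a connected one, I would arrange for each $i\in I$ that (a) color class $i$ is connected in $\cc G^*$ and (b) the detachment preserves this connectivity. For (a), the hypothesis that $\lambda K_m^h(i)$ is $r_i$-irregular yields, in every component of $\lambda K_m^h(i)$, a vertex of color-$i$ degree strictly less than $r_i$; after amalgamation that vertex lies on a color-$i$ edge incident to $\alpha$, so every component of $\lambda K_m^h(i)$ is glued through $\alpha$ and the whole color-$i$ subgraph of $\cc G^*$ is connected. For (b), the hypothesis $r_i\ge 2$ gives color-$i$ multiplicity $r_i(n-m)\ge 2(n-m)$ at $\alpha$, which supplies exactly the slack needed to distribute the color-$i$ edges among the $n-m$ new vertices so that each of them meets at least one color-$i$ edge linking it to the rest, keeping the detached class connected.

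The main obstacle is realizing (a) and (b) simultaneously for every $i\in I$ while respecting the degree and multiplicity constraints of the detachment step for every $i\in[k]$. Concretely, one needs a connectivity-preserving refinement of the Nash--Williams-type detachment lemma underlying Theorem~\ref{thm:general}, arranged so that for each color in $I$ the splitting of $\alpha$ does not break the class. The lower bound $n\ge(h-1)(2m-1)$ is what provides enough room at $\alpha$ to carry out this redistribution, and verifying that it suffices for every $i\in I$ while meshing with the other combinatorial constraints is where the technical heart of the proof will live.
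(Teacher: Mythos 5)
Your necessity argument is fine and matches the paper's (an $r$-factor with $r\le 1$ cannot be connected for these parameters, and an $r_i$-regular component of $\lambda K_m^h(i)$ is already saturated and so stays isolated). Your sufficiency step (a) is also exactly what the paper does: irregularity of every component of $\cc G(i)$ forces an edge from each component to $\alpha$, so the amalgamated color class $\cc H(i)$ is connected.

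The gap is in step (b). The assertion that $\dg_{\cc H(i)}(\alpha)=r_i(n-m)\ge 2(n-m)$ ``supplies exactly the slack needed'' to split $\alpha$ into $n-m$ vertices without disconnecting color $i$ is not justified, and it is not true on degree grounds alone. The relevant criterion (from the connected fair detachment theorem the paper cites) is $\dg_{\cc H(i)}(\alpha)-\omega_\alpha(\cc H(i))\ge n-m-1$, where $\omega_\alpha$ counts the $\alpha$-wings of the color class; a large degree at $\alpha$ is useless if the class decomposes into too many wings. In this construction there is a concrete threat: every $\alpha^h$-edge of color $i$ is a small wing, and the coloring places $\frac{r_in}{h}-r_im+\sum_{\ell=0}^{h-2}(h-\ell-1)\mult_{\cc H(i)}(*\alpha^{\ell})$ such edges in class $i$ --- on the order of $r_in/h$ of them --- plus up to one large wing per component of $\cc G(i)$. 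The technical heart you defer is precisely the verification that
\begin{align*}
\omega_\alpha(\cc H(i))\le \mult_{\cc H(i)}(\alpha^h)+c(\cc G(i))\le (r_i-1)(n-m)+1,
\end{align*}
which the paper proves by combining the explicit value of $\mult_{\cc H(i)}(\alpha^h)$, the bound $c(\cc G(i))\le\sum_{t}t\,\mult_{\cc H(i)}(*\alpha^{h-t})$ coming from connectedness of $\cc H(i)$, the trivial bound $c(\cc G(i))\le m$, and the inequality $n\ge hm$. Without this wing count the detachment step does not go through, so as written your argument is a correct plan with its decisive estimate still owed.
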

\begin{theorem} \label{connthmcomb2} 
In Theorem \ref{incompembthm},  let $A\subseteq \{i \in [k]\ |\ r_i\geq 2\}, B=\{i\in [q] \ |\ r_i\neq s_i\}$, and define $\overline {r}_i=r_i-1$ if $i\in A$, and  $\overline {r}_i=r_i$ if $i\in [k]\backslash A$. If 
\begin{align*} 
    \sum_{i\in B} \rounddown{\frac{(\overline {r}_i-s_i)m}{h}}+\sum_{i\in [k]\backslash [q]} \rounddown{\frac{\overline {r}_i m}{h}} \geq \lambda\binom{m}{h},
\end{align*}
then for  $i\in A$, the $r_i$-factor   $\lambda K_n^h(i)$ is connected if and only if $\cc G(i)$ is $r_i$-irregular.
\end{theorem}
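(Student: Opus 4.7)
The plan is to prove each direction separately, with necessity being routine and sufficiency running through the amalgamation-detachment framework that (we anticipate) underlies Theorem~\ref{incompembthm}. For necessity, suppose some $i\in A$ is such that $\cc G(i)$ has a component $C$ on a vertex set $S\subseteq[m]$ that is $r_i$-regular. Since $n\geq(h-1)(2m-1)>m$, the set $S$ is a proper subset of $[n]$. In any $\vb r$-factorization of $\lambda K_n^h$ extending the partial coloring of $\cc G$, each vertex of $S$ already has color-$i$ degree $r_i$, so no further color-$i$ edge can be incident to $S$. Thus $C$ is disjoint from $[n]\setminus S$ inside $\lambda K_n^h(i)$, which forces $\lambda K_n^h(i)$ to be disconnected.

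For sufficiency, I would amalgamate the vertices $m+1,\dots,n$ of $\lambda K_n^h$ into a single vertex $\alpha$, producing an edge-colored multi-hypergraph $\cc H$ on vertex set $[m]\cup\{\alpha\}$ whose restriction to $[m]$ extends the given partial $\vb s$-factorization of $\cc G$. The two tasks then become (a) to color the remaining edges of $\cc H$ so that every vertex of $[m]$ has color-$i$ degree exactly $r_i$, the multiplicities at $\alpha$ are compatible with detaching $\alpha$ into $n-m$ vertices each of color-$i$ degree $r_i$, and for every $i\in A$ the color-$i$ sub-hypergraph of $\cc H$ is connected; and (b) to apply a Bahmanian-type hypergraph detachment theorem that realizes this coloring as an $\vb r$-factorization of $\lambda K_n^h$ in which the connectedness of the colors in $A$ survives the detachment. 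The irregularity hypothesis enters at precisely one place in (a): because no component $D$ of $\cc G(i)$ is $r_i$-regular, $D$ contains a vertex $v_D$ with $\deg_{\cc G(i)}(v_D)<r_i$, so a color-$i$ edge of $\cc H$ containing both $v_D$ and $\alpha$ may be reserved without violating any degree bound; doing this for every component of $\cc G(i)$ and every $i\in A$ makes color $i$ connected in $\cc H$ through $\alpha$.

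The main obstacle is the simultaneous combinatorial design in (a): one must assign colors to every edge of $\lambda K_m^h\setminus\cc G$ and to all $\alpha$-incident multi-edges, respecting the per-vertex bounds $r_i$ on $[m]$, the reserved connectivity edges, the divisibility conditions at $\alpha$, and the admissibility identity, all at once for every $i\in[k]$. This is precisely where the sharpened hypothesis with $\overline r_i=r_i-1$ for $i\in A$ is spent: replacing $r_i$ by $r_i-1$ shrinks the color-$i$ degree budget on $[m]$ by $m$, equivalently by roughly $\lfloor m/h\rfloor$ edges in aggregate, thereby freeing the corresponding capacity at $\alpha$, which is exactly the reservoir the detachment theorem needs in order to spread color-$i$ edges across the $n-m$ new vertices without creating a disconnected piece. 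With these reservations in place I would apply the equitable edge-coloring lemma driving Theorem~\ref{incompembthm} (presumably proved by a network-flow/Baranyai-style argument) to produce the desired coloring of $\cc H$, and conclude with a connectivity-preserving detachment theorem, which reduces the question to verifying that the irregularity hypothesis together with the sharpened sum inequality implies the input hypotheses of that detachment theorem for each $i\in A$.
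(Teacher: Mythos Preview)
Your necessity argument matches the paper's. For sufficiency, however, the paper proceeds quite differently and much more economically, and your sketch misidentifies where the $\overline r_i$ hypothesis is actually spent.

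The paper does \emph{not} amalgamate $m+1,\dots,n$ into $\alpha$ and then try to color everything at once. Instead it reduces directly to Theorem~\ref{connthmcomb}. The observation is that Theorem~\ref{connthmcomb} already handles connectivity once one has a \emph{full} partial $\vb r$-factorization of $\lambda K_m^h$ in which each color class $i\in A$ is $r_i$-irregular. So the only task is to extend the given partial $\vb s$-factorization of $\cc G$ to such a coloring of all of $\lambda K_m^h$. This is done by a \emph{separate} amalgamation on $m$ (not $n-m$) vertices: collapse $[m]$ to a single vertex with $\lambda\binom{m}{h}$ loops, color the loops using the budgets $\lfloor(\overline r_i-s_i)m/h\rfloor$ for $i\in B$ and $\lfloor\overline r_i m/h\rfloor$ for $i\in[k]\setminus[q]$, and fairly detach back to $\lambda K_m^h$. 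The point of using $\overline r_i=r_i-1$ is then purely local: it forces every vertex of $[m]$ to receive at most $r_i-1$ additional color-$i$ incidences, so the total color-$i$ degree in $\lambda K_m^h$ is at most $s_i+(r_i-s_i-1)<r_i$ (or $r_i-1$ when $i>q$), whence no component can be $r_i$-regular. One line later, Theorem~\ref{connthmcomb} finishes the proof.

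Your plan would instead redo the wing-counting of Theorem~\ref{connthmcomb} from scratch, and the mechanism you propose---reserving one $v_D\alpha$-edge per component $D$ of $\cc G(i)$---does not by itself give what the connected-detachment theorem needs. Connectedness of $\cc H(i)$ is necessary but not sufficient; one must verify $\deg_{\cc H(i)}(\alpha)-\omega_\alpha(\cc H(i))\geq n-m-1$, and your ``reserve one edge per component'' move neither controls $\omega_\alpha$ nor addresses the new components that can appear once you color the edges of $\lambda K_m^h\setminus\cc G$. The paper sidesteps all of this by simply keeping every color-$i$ degree strictly below $r_i$ during the $\lambda K_m^h$ extension and then invoking Theorem~\ref{connthmcomb} as a black box.
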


Before we prove Theorems \ref{thm:general}--\ref{connthmcomb2},  we provide several simple corollaries in the next section. These corollaries extend various classical combinatorial results. We postpone more involved applications to Section \ref{compappli}. When needed, we will  use  Bernoulli's inequality ($\forall p\geq 1, \forall x\geq -1: (1+x)^p\geq 1+px$) and the following combinatorial identities without further explanation. For  $n\geq m\geq h$
\begin{align*}
&\sum_{i=0}^h\binom{m}{i}\binom{n-m}{h-i}=\binom{n}{h}, 
&\sum_{i=1}^{h-1} i\binom{m}{i}\binom{n-m}{h-i}=m\left [\binom{n-1}{h-1}-\binom{m-1}{h-1}\right ].
\end{align*}

\section{Corollaries}
To exhibit the effectiveness of our main results, first we provide a few simple applications of Theorems \ref{thm:general} and \ref{connthmcomb}. 

\begin{corollary}
Let 
\begin{align*}
    && n &\geq (h-1)(2m-1), &&  d=\lambda \binom{n-1}{h-1}, && g=\dfrac{h}{\gcd(n,h)},\\
     &&  \cc H&=\lambda K_m^h, && \cc F= \lambda K_n^h.
\end{align*}
\begin{enumerate}[label=\textup{(\Roman*)}]
    \item A proper $d$-coloring of $\cc H$  can be extended to a 1-factorization of $\cc F$ if and only if $n \equiv 0 \Mod h$.
    \item A partial $g$-factorization of $\cc H$ using $d/g$ colors can be extended to a  $g$-factorization of $\cc F$.
    \item A partial $g$-factorization of $\cc H$ using $k:=d/g$ colors can be extended to a connected $g$-factorization of $\cc F$ if and only if  $\cc H(i)$ is $g$-irregular for $i\in [k]$ and  $n \notequiv 0 \Mod h$.
    \item A partial 2-factorization of $\cc H$ using $k:=\rounddown{d/2}$ colors can be extended to a connected 2-factorization of $\cc F$ if and only if  $\cc H(i)$ is 2-irregular for $i\in [k]$, $2n \equiv 0 \Mod h$, and  $d \equiv 0 \Mod 2$.
    \item A partial $h$-factorization of $\cc H$ using $k:=\rounddown{d/h}$ colors can be extended to a connected $h$-factorization of $\cc F$ if and only if  $h\geq 2$, $\cc H(i)$ is $h$-irregular for $i\in [k]$, and  $d \equiv 0 \Mod h$.
   \end{enumerate}
\end{corollary}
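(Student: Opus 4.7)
The plan is to deduce each of (I)--(V) directly from Theorems \ref{thm:general} and \ref{connthmcomb} by choosing the appropriate factor vector $\vb r$; in every case, the only work is to translate the admissibility conditions \eqref{divcond+h1} (and the connectedness criteria of Theorem \ref{connthmcomb}) into the specific constraints stated in each part. The hypothesis $n\geq(h-1)(2m-1)$ coincides in the theorems and in the corollary, so nothing needs to be checked there.

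For (I), I would take $\vb r=(1,\dots,1)$ with $d$ components, so that a proper $d$-coloring is precisely a partial $\vb r$-factorization. The sum $\sum r_i=d=\lambda\binom{n-1}{h-1}$ holds automatically, and $h\mid r_i n$ reduces to $h\mid n$. For (II), the choice is $\vb r=(g,\dots,g)$ with $d/g$ components: since $gn=\mathrm{lcm}(n,h)$ is a multiple of $h$, the first admissibility condition holds unconditionally. Setting $g'=n/\gcd(n,h)$, the identity $h\binom{n}{h}=n\binom{n-1}{h-1}$ divided by $\gcd(n,h)$ becomes $g\binom{n}{h}=g'\binom{n-1}{h-1}$, and $\gcd(g,g')=1$ forces $g\mid\binom{n-1}{h-1}$, hence $g\mid d$. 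Thus $d/g$ is a positive integer and $\sum r_i=d$. In both (I) and (II), Theorem \ref{thm:general} yields the conclusion.

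Parts (III)--(V) additionally invoke Theorem \ref{connthmcomb}, which asks for $r_i\geq 2$ together with $r_i$-irregularity of every $\mathcal H(i)$. In (III), I would reuse the $\vb r$ of (II); the condition $g\geq 2$ is precisely $\gcd(n,h)<h$, i.e.\ $n\not\equiv 0\pmod h$. For (IV), taking $\vb r=(2,\dots,2)$ with $k=\lfloor d/2\rfloor$ components, the sum $2k=d$ forces $d$ even (and then $k=d/2$), the divisibility becomes $h\mid 2n$, and $r_i=2\geq 2$ is automatic. For (V), the choice $\vb r=(h,\dots,h)$ with $k=\lfloor d/h\rfloor$ components makes the sum condition read $h\mid d$, the divisibility $h\mid hn$ is trivial, and $r_i=h\geq 2$ becomes $h\geq 2$.

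Since each part is a mechanical specialization, the only genuine obstacle is the divisibility $g\mid d$ used in (II) and (III). This is a short number-theoretic argument via $h\binom{n}{h}=n\binom{n-1}{h-1}$; once it is in place, every equivalence in the corollary follows by inspection.
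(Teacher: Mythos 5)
Your proposal is correct and follows essentially the same route as the paper: specialize Theorems \ref{thm:general} and \ref{connthmcomb} with $\vb r=(1,\dots,1)$, $(g,\dots,g)$, $(2,\dots,2)$, and $(h,\dots,h)$ respectively, and translate admissibility and the connectivity criteria into the stated conditions. Your derivation of $g\mid d$ from $h\binom{n}{h}=n\binom{n-1}{h-1}$ and $\gcd(g,g')=1$ correctly supplies the ``elementary number theory'' step that the paper leaves implicit.
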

\begin{proof}
To prove  (I), apply Theorem \ref{thm:general} with $k=d, \vb{r}=(1,\dots, 1)$. To prove (II) and (III), let $\vb{r}=(g,\dots,g)$. Using elementary number theory, one can show that $(n,h,\lambda, \vb{r})$ is admissible. Applying Theorem \ref{thm:general} completes the proof of (II). Moreover, $g\geq 2$ if and only if $h\notdivides n$. Applying Theorem \ref{connthmcomb} completes the proof of (III). Proof of (IV) and (V) is obtained by applying Theorem \ref{connthmcomb}  with $\vb{r}=(2,\dots,2)$ and $\vb{r}=(h,\dots,h)$, respectively. 
\end{proof}
\begin{remark}\textup{
(I) is a hypergraph analogue of Cruse's theorem \cite{MR0329925}  that a  proper $(n-1)$-coloring of $K_m$  can be extended to a proper $(n-1)$-coloring of $K_n$ whenever $n$ is even and $n\geq 2m$. Baranyai \cite{MR0416986} constructed $g$-factorizations for $\cc F$ (among other things) and (II) and (III)  strengthen this.
Both (IV) and (V)   are  hypergraph analogues  of Hilton's theorem on extending path decompositions of $K_m$ to Hamiltonian decompositions of $K_n$ \cite{MR746544}. More general results are provided in Section \ref{compappli}. 
}\end{remark}

\section{Proof of Theorem \ref{thm:general}} \label{proof1sec}
Suppose that a partial $\vb r$-factorization of $\lambda K_m^h$ is extended to an $\vb r$-factorization of $\lambda K_n^h$.  In \eqref{divcond+h1}, we established that    $(n,h,\lambda, \vb r)$ must be admissible. 

Conversely, suppose that $(n,h,\lambda, \vb r)$ is admissible, $n \geq (h-1)(2m-1)$, and that a partial $\vb r$-factorization of $\cc G:=\lambda K_m^h$ is given. Let $\cc H$ be the hypergraph whose vertex set is $V(\cc G)\cup \{\alpha\}$ and whose edge multiset is the (colored) edge multiset of $\cc G$ together with further (uncolored) edges, containing (possibly multiple copies of) the new vertex $\alpha$, described as follows.
\begin{align}\label{def2vhyp}
    &&
    \mult_\cc H(X\alpha^{h-i})=\lambda\binom{n-m}{h-i} \qquad \forall X\subseteq V(\cc G), |X|=i, 0\leq i\leq h-1.
    &&
\end{align}
By $\mult_\cc H(X\alpha^j)$ we mean the number of occurrences of an $X\alpha^j$-edge, or $*\alpha^j$-edge for short, in $\cc H$, which is an
edge of the form $X\cup \{\alpha^{j}\}$ (so it contains $j$ copies of $\alpha$). Observe that the edges of $\cc G$ are the $*\alpha^0$-edges. The total number of occurrences of a vertex $v$ in all the edges of $\cc H$ is the {\it degree} of $v$, written  $\dg_\cc H(v)$, and $\cc H(i)$ denotes the current color class $i$ of $\cc H$. 
In the next three subsections, we color the remaining edges of $\cc H$.

\subsection{Coloring the \texorpdfstring{$\bm{*\alpha^i}$}{p}-edges, \texorpdfstring{$\bm{i\in [h-2]}$}{p}} \label{coloringIsubsec}
We color the $*\alpha$-edges, $*\alpha^2$-edges, \dots, $*\alpha^{h-2}$-edges of $\cc H$, in that particular order,  such that 
\begin{align} \label{eq:dg_h-2}
     \dg_{\cc H(j)}(x)\leq r_j
\qquad   \forall x \in V(\cc G), j\in [k].
\end{align}
Suppose to the contrary that for some $i\in [h-2]$, there is an $*\alpha^{i}$-edge $e$ in $\cc H$  that cannot be colored. Let $e=X\cup \{\alpha^i\}$ where $X$ is an $(h-i)$-subset  of $V(\cc G)$. For each $j\in[k]$, there is some $x\in X$ such that $\dg_{\cc H(j)}(x)=r_j$, and consequently, for all $j\in[k]$, $\sum_{x\in X}\dg_{\cc H(j)}(x)\geq r_j$. On the one hand,
$$\sum_{j=1}^k\sum_{x\in X}\dg_{\cc H(j)}(x)\geq \sum_{j=1}^k r_j = \lambda\binom{n-1}{h-1},$$
and on the other hand, by counting the edges colored so far,
$$\sum_{j=1}^k\sum_{x\in X}\dg_{\cc H(j)}(x)\leq \lambda(h-i)\left[\binom{m-1}{h-1} + \sum_{\ell=1}^i\binom{n-m}{\ell}\binom{m-1}{h-\ell-1}-1\right].$$
Thus, we have
\begin{align*}
    \lambda\binom{n-1}{h-1}\leq \lambda(h-i)\left[\binom{m-1}{h-1}+\sum_{\ell=1}^i\binom{n-m}{\ell}\binom{m-1}{h-\ell-1}-1\right].
\end{align*}
We  prove that this is a contradiction by establishing that $f(i)> 0$ for $i\in [h-2]$ where
\begin{align*}
    f(i) &:= \dbinom{n-1}{h-1} - (h-i)\left[\sum_{\ell=0}^{i}\dbinom{n-m}{\ell}\dbinom{m-1}{h-\ell-1} - 1\right], \quad i\in [h-2].
\end{align*}
Since 
\begin{align*}
    f(h-2) =& \binom{n-1}{h-1} - 2\left[\sum_{\ell=0}^{h-2}\binom{n-m}{\ell}\binom{m-1}{h-\ell-1}-1\right]\\
    &= \binom{n-1}{h-1} - 2\left[ \binom{n-1}{h-1} -  \binom{n-m}{h-1}\right]+2\\
    &= 2\binom{n-m}{h-1} - \binom{n-1}{h-1} + 2,
\end{align*}
the following proves that $f(h-2)> 0$.
\begin{align*}
    \binom{n-m}{h-1}\bigg/\binom{n-1}{h-1} &= \dfrac{(n-m)!(n-h)!}{(n-1)!(n-m-h+1)!}\\
    &= \prod_{i=1}^{h-1}\dfrac{n-m-i+1}{n-i}= \prod_{i=1}^{h-1}\left(1-\dfrac{m-1}{n-i}\right)\\
    &\geq \prod_{i=1}^{h-1}\left(1-\dfrac{m-1}{n-h+1}\right)
    = \left(1 - \dfrac{m-1}{n-h+1}\right)^{h-1}\\
   &\geq 1 - \dfrac{(h-1)(m-1)}{n-h+1}\\
    &\geq 1 - \dfrac{(h-1)(m-1)}{(h-1)(2m-1)-(h-1)}
    = \frac{1}{2}.
\end{align*}
To show $f(i)> 0$ for $i\in [h-3]$, let
\begin{align*}
    g(i) &= f(i+1) - f(i)\\
    &= \sum_{\ell=0}^{i+1}\binom{n-m}{\ell}\binom{m-1}{h-\ell-1} - (h-i)\binom{m-1}{h-i-2}\binom{n-m}{i+1} - 1, \quad i\in [h-4]. 
\end{align*}
Since 
\begin{align*}
g(i)&-g(i+1) =  (h-i-2)\binom{m-1}{h-i-3}\binom{n-m}{i+2}-(h-i)\binom{m-1}{h-i-2}\binom{n-m}{i+1}\\
&=\binom{n-m}{i+1}\binom{m-1}{h-i-3}\left(\frac{(h-i-2)(n-m-i-1)}{i+2}-\frac{(h-i)(m-h+i+2)}{h-i-2}\right),
\end{align*}
for $i\in[h-4]$,   $g(i)> g(i+1)$ if and only if
\begin{align}\label{gdecequiv}
(h-i-2)^2(n-m-i-1)> (h-i)(i+2)(m-h+i+2).
\end{align}
Since  $i\leq h-4$, we have $\dfrac{1}{h-i-2} \leq \dfrac{1}{2}$, $\dfrac{h-i}{h-i-2} \leq 2$, and so $\dfrac{h-i}{(h-i-2)^2} \leq 1$. Therefore, 
\begin{align*}
    \dfrac{(h-i)(i+2)(m-h+i+2)}{(h-i-2)^2} + m+i+1 \leq    (h-2)(m-2)+m+h-3< n.
\end{align*}
This proves \eqref{gdecequiv}, and consequently, $g$ is strictly decreasing for $i\in[h-4]$. Thus, there exists an $a$ with $0\leq a\leq h-4$ such that $g(i)\geq 0$ for $1\leq i\leq a$ and $g(i)\leq 0$ for $a+1\leq i\leq h-4$. Therefore, $f(a+1)>f(a)>\dots>f(1)$ and  $f(a+2)>f(a+3)>\dots>f(h-3)$. So, if we show that $f(1)>0$ and $f(h-3)>0$, then we are done.

Since 
\begin{align*}
 f(1) &= \binom{n-1}{h-1} - (h-1)\left[\binom{m-1}{h-1} + (n-m)\binom{m-1}{h-2} - 1\right]\\
    &>  \binom{n-1}{h-1} - (h-1)\binom{m-1}{h-1} - (h-1)(n-m)\binom{m-1}{h-2} \\
    &= \binom{n-1}{h-1} - \binom{m-1}{h-2}\Big((h-1)(n-m)+m-h+1\Big)\\
    &> \binom{n-1}{h-1} - (h-1)(n-1)\binom{m-1}{h-2},
\end{align*}
the following proves that $f(1)> 0$. 
\begin{align*}
    \dfrac{\dbinom{n-1}{h-1}}{\dbinom{m-1}{h-2}}&= \frac{(n-1)!(m-h+1)!}{(h-1) (n-h)!(m-1)!} 
    = \frac{n-1}{h-1}\prod_{i=1}^{h-2}\frac{n-i-1}{m-i}\\
    &= \frac{n-1}{h-1}\prod_{i=1}^{h-2}\left(1 + \frac{n-m-1}{m-i}\right)\\
   & \geq \frac{n-1}{h-1}\prod_{i=1}^{h-2}\left(1 + \frac{n-m-1}{m}\right)\\
    &= \frac{n-1}{h-1}\left(1 + \frac{n-m-1}{m}\right)^{h-2}\\
   & \geq \frac{n-1}{h-1}\left(1 + \frac{(h-2)(n-m-1)}{m}\right) \\
   & \geq \frac{n-1}{h-1}\left(1 + \frac{hm(h-2)}{m}\right)=(n-1)(h-1).
\end{align*}
Now, we show that $f(h-3)>0$. 
Since 
\begin{align*}
    f(h-3) =& \binom{n-1}{h-1} - 3\left[\sum_{\ell=0}^{h-3}\binom{n-m}{\ell}\binom{m-1}{h-\ell-1} - 1\right]\\
    &= \binom{n-1}{h-1} - 3\left[ \binom{n-1}{h-1} -  \binom{n-m}{h-1} - (m-1)\binom{n-m}{h-2}\right]+3\\
    &= 3\binom{n-m}{h-1} + 3(m-1)\binom{n-m}{h-2} - 2\binom{n-1}{h-1} + 3,
\end{align*}
the following proves that $f(h-3)> 0$. 
\begin{align*}
    &\left[\binom{n-m}{h-1}+(m-1)\binom{n-m}{h-2}\right]\bigg/\binom{n-1}{h-1} \\
    &\qquad= \binom{n-m}{h-2}\left(\frac{n-m-h+2}{h-1} + m - 1\right)\bigg/\binom{n-1}{h-1}\\
    &\qquad= \left(\frac{n-m-h+2}{h-1} + m - 1\right)\dfrac{(h-1)(n-m)!(n-h)!}{(n-1)!(n-m-h+2)!}\\
    &\qquad= \left(\frac{n-m-h+2}{h-1} + m - 1\right)\frac{h-1}{n-m-h+2}\prod_{i=1}^{h-1}\dfrac{n-m-i+1}{n-i}\\
    &\qquad\geq \left(1 + \frac{(h-1)(m-1)}{n-h+1}\right) \prod_{i=1}^{h-1}\left(1-\dfrac{m-1}{n-i}\right)\\
    &\qquad\geq \left(1 + \frac{(h-1)(m-1)}{n-h+1}\right) \left(1 - \dfrac{m-1}{n-h+1}\right)^{h-1}\\
   &\qquad\geq \left(1 + \frac{(h-1)(m-1)}{n-h+1}\right) \left(1 - \dfrac{(h-1)(m-1)}{n-h+1}\right)\\
   &\qquad\geq 1 - \left(\frac{(h-1)(m-1)}{2(h-1)(m-1)}\right)^2
   = 1 - \frac{1}{4}  > \frac{2}{3}.
\end{align*}

\subsection{Coloring the \texorpdfstring{$\bm{*\alpha^{h-1}}$}{p}-edges}  We  color  the $*\alpha^{h-1}$-edges  such that
$$
\mult_{\cc H(j)}(x\alpha^{h-1})=r_j - \dg_{\cc H(j)}(x) \quad \forall x\in V(\cc G), j\in [k].
$$
This is possible because for  $x\in V(\cc G)$,
\begin{align*}
    \sum_{j=1}^k\left(r_j-\dg_{\cc H(j)}(x)\right) &= \sum_{j=1}^k r_j - \sum_{j=1}^k\dg_{\cc H(j)}(x)\\
    &= \lambda\binom{n-1}{h-1} - \dg_\cc H(x)\\
    &= \lambda\binom{n-1}{h-1} - \sum_{\ell=1}^{h-1}\lambda\binom{m}{\ell}\binom{n-m}{h-\ell-1}\\
    &= \lambda\sum_{\ell=0}^{h-1}\binom{m}{\ell}\binom{n-m}{h-\ell-1} - \lambda\sum_{\ell=1}^{h-1}\binom{m}{\ell}\binom{n-m}{h-\ell-1}\\
    &= \lambda\binom{n-m}{h-1}.
\end{align*}
\subsection{Coloring the \texorpdfstring{$\bm{\alpha^{h}}$}{p}-edges} 
Recall that $n\geq h m$ and   $h\divides r_j n$ for $j\in [k]$. Hence, we color the $\alpha^{h}$-edges such that
\begin{align*}
    \mult_{\cc H(j)}(\alpha^h) = \frac{r_j n}{h} - r_j m + \sum_{\ell=0}^{h-2}(h-\ell-1)\mult_{\cc H(j)}(*\alpha^{\ell}) \quad \forall j\in[k].
\end{align*}
Let us verify that this is in fact possible. 
 \begin{align*}
     \sum_{j=1}^k\mult_{\cc H(j)}(\alpha^h) =& \sum_{j=1}^k\left(\frac{r_j n}{h} - r_j m + \sum_{\ell=0}^{h-2}(h-\ell-1)\mult_{\cc H(j)}( \alpha^{\ell})\right)\\
     &= \frac{n}{h}\sum_{j=1}^k r_j - m\sum_{j=1}^k r_j + \sum_{j=1}^k\sum_{\ell=0}^{h-2}(h-\ell-1)\mult_{\cc H(j)}(*\alpha^{\ell})\\
     &= \lambda\frac{n}{h}\binom{n-1}{h-1} - \lambda m\binom{n-1}{h-1} + \sum_{\ell=0}^{h-2}(h-\ell-1)\sum_{j=1}^k\mult_{\cc H(j)}(*\alpha^{\ell})\\
     &= \lambda\binom{n}{h} - \lambda m\binom{n-1}{h-1} + \sum_{\ell=0}^{h-2}\lambda(h-\ell-1)\binom{m}{h-\ell}\binom{n-m}{\ell}\\
     &= \lambda\sum_{\ell=0}^h\binom{m}{h-\ell}\binom{n-m}{\ell} - \lambda m\binom{n-1}{h-1} \\ 
     &\quad +\sum_{\ell=0}^{h-2}\lambda(h-\ell)\binom{m}{h-\ell}\binom{n-m}{\ell}- \sum_{\ell=0}^{h-2}\lambda\binom{m}{h-\ell}\binom{n-m}{\ell}\\
     &= \lambda m\binom{n-m}{h-1} + \lambda\binom{n-m}{h} - \lambda m\binom{n-1}{h-1} \\ 
     &\qquad\qquad\qquad\quad +\sum_{\ell=0}^{h-2}\lambda(h-\ell)\binom{m}{h-\ell}\binom{n-m}{\ell}\\
     &= \lambda\binom{n-m}{h} - \lambda m\binom{n-1}{h-1} + \sum_{\ell=0}^{h-1}\lambda(h-\ell)\binom{m}{h-\ell}\binom{n-m}{\ell}\\
     &= \lambda\binom{n-m}{h} - \lambda m\binom{n-1}{h-1} + \lambda h\binom{m}{h} + \lambda m\left[\binom{n-1}{h-1} - \binom{m-1}{h-1}\right]\\
     &= \lambda\binom{n-m}{h} + \lambda m\binom{n-1}{h-1} - \lambda m\binom{n-1}{h-1} + \lambda h\binom{m}{h} - \lambda m\binom{m-1}{h-1}\\
     &= \lambda\binom{n-m}{h}.
 \end{align*}
\subsection{Regularity of the Coloring of  \texorpdfstring{$\bm{\cc H}$}{p}} \label{coloringIVsubsec}
As a result of the coloring of the $*\alpha^{h-1}$-edges, we have
\begin{equation} \label{degreepart1}
    \dg_{\cc H(j)}(x) = r_j \quad \forall x\in V(\cc G), j\in [k],
\end{equation}
and so
\begin{align} \label{rjmsum}
r_j m = \sum_{x\in V(\cc G)} \dg_{\cc H(j)}(x) = \sum_{\ell=0}^{h-1}(h-\ell)\mult_{\cc H(j)}(*\alpha^{\ell})\quad  \forall j\in [k].    
\end{align}
Hence,
\begin{align*}
    \dg_{\cc H(j)}(\alpha) &= \sum_{\ell=0}^h \ell\mult_j(*\alpha^{\ell})\\
    &= h\mult_j(*\alpha^h) + \sum_{\ell=0}^{h-1}\ell\mult_j(*\alpha^{\ell})\\
    &= h\left(\frac{r_jn}{h} - r_jm + \sum_{\ell=0}^{h-2}(h-\ell-1)\mult_j(*\alpha^{\ell})\right) + \sum_{\ell=0}^{h-1}\ell\mult_j(*\alpha^{\ell})\\
    &= r_jn - hr_jm + \sum_{\ell=0}^{h-2}h(h-\ell-1)\mult_j(*\alpha^{\ell}) + \sum_{\ell=0}^{h-1}\ell\mult_j(*\alpha^{\ell})\\
    &= r_jn - h\sum_{\ell=0}^{h-1}(h-\ell)\mult_j(*\alpha^{\ell}) + \sum_{\ell=0}^{h-2}h(h-\ell-1)\mult_j(*\alpha^{\ell}) + \sum_{\ell=0}^{h-1}\ell\mult_j(*\alpha^{\ell})\\
    &= r_jn - h\mult_j(*\alpha^{h-1}) - h\sum_{\ell=0}^{h-2}(h-\ell)\mult_j(*\alpha^{\ell}) + h\sum_{\ell=0}^{h-2}(h-\ell-1)\mult_j(*\alpha^{\ell})\\
    &\qquad+ \sum_{\ell=0}^{h-1}\ell\mult_j(*\alpha^{\ell})\\
    &= r_jn - h\mult_j(*\alpha^{h-1}) - h\sum_{\ell=0}^{h-2}\mult_j(*\alpha^{\ell}) + \sum_{\ell=0}^{h-1}\ell\mult_j(*\alpha^{\ell})\\
    &= r_jn - h\sum_{\ell=0}^{h-1}\mult_j(*\alpha^{\ell}) + \sum_{\ell=0}^{h-1}\ell\mult_j(*\alpha^{\ell})\\
    &= r_jn - \sum_{\ell=0}^{h-1}(h-\ell)\mult_j(*\alpha^{\ell})\\
    &= r_j n - r_j m = r_j(n-m) \qquad \forall j\in [k].
\end{align*}
\subsection{A Fair Detachment of  \texorpdfstring{$\bm{\cc H}$}{p}} \label{fairetachmsubsec}
By \cite[Theorem 4.1]{MR2942724}, there exists an $n$-vertex hypergraph $\cc F$, called the {\it fair $(\alpha, n-m)$-detachment} of $\cc H$, obtained by replacing the vertex $\alpha$ of $\cc H$ by $n-m$ new vertices  $\alpha_1,\dots,\alpha_{n-m}$ in $\cc F$ and replacing each $X\alpha^i$-edge by an edge of the form $X\cup U$ where $U\subseteq \{\alpha_1,\dots,\alpha_{n-m}\}, |U|=i\in [h]$ (leave the remaining vertices and edges of $\cc H$ intact),   such that the edges incident with $\alpha$ (in each color class of $\cc H$) are shared as evenly as possible among $\alpha_1,\dots,\alpha_{n-m}$ in $\cc F$ in the following way.
\begin{itemize}
    \item [(a)] For  $i\in [n-m],j\in[k]$, 
    \begin{align*} 
        \dg_{\cc F(j)}(\alpha_i)=\dfrac{\dg_{\cc H(j)}(\alpha)}{n-m}=\dfrac{r_j(n-m)}{n-m}=r_j;
    \end{align*}
    \item [(b)] For $X\subseteq V(\cc G), U\subseteq \{\alpha_1,\dots,\alpha_{n-m}\}, |X|=h-i, |U|=i\in [h]$, 
    \begin{align} \label{multcond}
    \mult_{\cc F}(X\cup U)=\dfrac{\mult_{\cc H}(X\alpha^i)}{\dbinom{n-m}{i}}=\dfrac{\lambda\dbinom{n-m}{i}}{\dbinom{n-m}{i}}=\lambda.
    \end{align}
\end{itemize}
Observe that by (b), $\cc F \cong \lambda K_n^h$, and by \eqref{degreepart1} and (a), $\cc F(i)$ is an $r_i$-factor for $i\in [k]$. This completes the proof of Theorem \ref{thm:general}.

\section{Proof of Theorem \ref{incompembthm}}

Suppose that $n \geq (h-1)(2m-1)$ and $(n,h,\lambda, \vb r)$ is admissible where $\vb{s}:=(s_1,\dots,s_q), \vb{r}:=(r_1,\dots,r_k)$ such that $$\sum_{i=1}^q \rounddown{\frac{(r_i-s_i)m}{h}}+\sum_{i=q+1}^k \rounddown{\frac{r_im}{h}} \geq \lambda\binom{m}{h}.$$
 It suffices to extend the given partial $\vb{s}$-factorization of $\mathcal G\subseteq \cc G_1:=\lambda K_m^h$ to a partial $\vb r$-factorization of $\cc G_1$, for,  by Theorem \ref{thm:general}, we may extend this partial $\vb r$-factorization of $\cc G_1$ to an $\vb r$-factorization of $\lambda K_n^h$.

Let $\cc H$ be a hypergraph whose vertex set is $\{\alpha\}$ and has ${\lambda \binom{m}{h}}$ copies of an edge of the form $\{\alpha^h\}$. In other words,
$$
V(\cc H)=\{\alpha\}, \quad \mult_\cc H(\alpha^h)=\lambda\binom{m}{h}.
$$
We color the edges of $\cc H$ such that
\begin{align*}
  \mult_{\cc H(i)}(\alpha^h)\leq
  \begin{cases}
    \rounddown{\dfrac{(r_i-s_i) m}{h}}
    &
    {\text{for}}\ i\in [q],
    \\[10pt]
    \rounddown{\dfrac{r_i m}{h}}
    &
    \text{for}\ i\in [k]\backslash[q].
  \end{cases}
\end{align*}
By \cite[Theorem 4.1]{MR2942724}, there exists an $m$-vertex hypergraph $\cc F$ obtained by replacing the vertex $\alpha$ of $\cc H$ by $m$ new vertices $\alpha_1,\dots,\alpha_m$ in $\cc F$ and replacing each $\alpha^h$-edge by an edge of the form $U$ where $U\subseteq \{\alpha_1,\dots,\alpha_{m}\}, |U|=h$, such that the edges incident with $\alpha$ (in each color class of $\cc H$) are shared as evenly as possible among $\alpha_1,\dots,\alpha_m$ in $\cc F$ in the following way. 
\begin{align} 
        \dg_{\cc F(j)}(\alpha_i)&\approx\dfrac{\dg_{\cc H(j)}(\alpha)}{m}\leq\dfrac{h}{m}\rounddown{\dfrac{(r_j-s_j) m}{h}}\leq r_j-s_j  &\forall i\in [m],j\in [q]; \label{degcondincompnew}\\[10pt]
        \dg_{\cc F(j)}(\alpha_i)&\approx\dfrac{\dg_{\cc H(j)}(\alpha)}{m}\leq\dfrac{h}{m}\rounddown{\dfrac{r_j m}{h}}\leq r_j  &\forall i\in [m],j\in [k]\backslash[q];\label{degcondincomp} \\[10pt]
        \mult_{\cc F}(U)&=\mult_{\cc H}(\alpha^h)\Big/\binom{m}{h}=\lambda\dbinom{m}{h}\bigg/\dbinom{m}{h}=\lambda  &\forall U\subseteq \{\alpha_1,\dots,\alpha_{n}\}, |U|=h.  \label{multcondincomp}      
\end{align}
Here, $x \approx y$ means $\lfloor y \rfloor\leq x\leq  \lceil y \rceil$. By \eqref{multcondincomp}, $\cc F \cong \lambda K_m^h$, and by \eqref{degcondincompnew} and \eqref{degcondincomp}, the coloring of $\cc F$ induces a partial $(r_1-s_1,\dots,r_q-s_q,r_{q+1},\dots,r_k)$-factorization. We color each edge of $\cc G_1 \backslash \mathcal G$ with the color of the corresponding edge in $\cc F$. This leads to a partial $\vb r$-factorization of $\cc G_1$, as desired, and completes the proof of Theorem \ref{incompembthm}.

\section{Proof of Theorem \ref{connthmcomb}}
 A vertex $\alpha$ in a connected hypergraph $\mathcal G$ is a {\it cut vertex} if there exist two non-trivial sub-hypergraphs $I, J$ of $\mathcal G$ such that  $I\cup J=\mathcal G$,  $V( I\cap  J)=\{\alpha\}$, and  $E( I\cap  J)=\varnothing$. A sub-hypergraph $W$ of a  hypergraph $\mathcal G$ is an {\it $\alpha$-wing} of $\mathcal G$ if (i) $W$ is non-trivial and connected, (ii) $\alpha$ is not a cut vertex of $W$, and (iii) no edge in $E(\mathcal G)\backslash E(W)$ is incident with a vertex in $V(W)\backslash \{\alpha\}$. An $\alpha$-wing $W$ is  {\it large} if $V(W)\neq \{\alpha\}$ and is  {\it small} if $V(W)= \{\alpha\}$. Let $\omega_{\alpha}(\cc G)$ and $\omega^L_{\alpha}(\cc G)$ be the number of $\alpha$-wings and  the number of large $\alpha$-wings in $\cc G$, respectively. Let $c(\cc G)$ denote the number of components of $\cc G$. 

An $r$-factor cannot be connected unless $r\geq 2$.  Moreover, if a component of a color class of $\lambda K_m^h$  is $r$-regular, then there is no way to extend it to a connected $r$-factor in $\lambda K_n^h$. This justifies the necessity of the conditions in Theorem \ref{connthmcomb}.

Now, suppose that $(n,h,\lambda, \vb r)$ is admissible, $n \geq (h-1)(2m-1)$, and  a partial $\vb r$-factorization of $\cc G:=\lambda K_m^h$ is given. Let $\cc H$ be the hypergraph defined in Section \ref{proof1sec} whose edges are colored according to the coloring described in Subsections \ref{coloringIsubsec}--\ref{coloringIVsubsec}.

Let us fix $j\in [k]$ such that $r_j \geq 2$ and no component of $\cc G(j)$ is $r_j$-regular.  Since $\dg_{\cc H(j)}(u) = r_j$ for all $u\in V(\cc G)$, there is at least one edge joining $\alpha$ and each component of $\cc G(j)$. Hence, $\cc H(j)$ must be connected. We  prove that $\cc F(j)$ --- constructed in Subsection \ref{fairetachmsubsec} --- is connected. 
\subsection{An Upper Bound for the Number of Wings} We claim that
\begin{equation}  \label{wingcondcor1cam}
\omega_\alpha(\cc H(j))\leq (r_j-1)(n-m)+1.
\end{equation}
As each component of $G(j)$ is itself connected, it cannot correspond to more than one large $\alpha$-wing in $H(j)$. Each of these components may correspond to an $\alpha$-wing in $H(j)$ if there is some $*\alpha^i$-edge in $H(j)$ containing some vertex in the component. Since every $\alpha^h$-edge is a small $\alpha$-wing in $\cc H(j)$ and each component of $\cc G(j)$ corresponds to at most one large  $\alpha$-wing in $\cc H(j)$, we have 
\begin{align*}
\omega_\alpha(\cc H(j))&=\mult_{\cc H(j)}(\alpha^h)+\omega^L_\alpha(\cc H(j))\\
&\leq  \mult_{\cc H(j)}(\alpha^h)+c(\cc G(j))\\
& = \frac{r_jn}{h} - r_jm + \sum_{i=1}^{h-1}(h-i)\mult_{\cc H(j)}(*\alpha^{i-1}) + c(\cc G(j)).
\end{align*}
Thus, to prove \eqref{wingcondcor1cam}, it suffices to show that
\begin{align*}
    \frac{r_jn}{h} - r_jm + \sum_{i=1}^{h-1}(h-i)\mult_{\cc H(j)}(*\alpha^{i-1}) + c(\cc G(j)) &\leq (r_j-1)(n-m) + 1,
\end{align*}
which is equivalent to showing
\begin{equation} \label{wingcondv2}
    r_j n\left(1-\frac{1}{h}\right) - n + m + 1 \geq \sum_{i=1}^{h-1}(h-i)\mult_{\cc H(j)}(*\alpha^{i-1}) + c(\cc G(j)).
\end{equation}

For $i\in[h-1]$, an $*\alpha^{h-i}$-edge in $\cc H$ contains $i$ vertices of $V(\cc G)$ which are contained in at most $i$ different components of $\cc G(j)$. Thus, an $*\alpha^{h-i}$-edge connects at most $i$ components of $\cc G(j)$. Since $\cc H(j)$ is connected, we have
\begin{equation*}
c(\cc G(j)) \leq \sum_{i=1}^{h-1}i\mult_{\cc H(j)}(* \alpha^{h-i}). 
\end{equation*}
Therefore,
\begin{equation} \label{components}
\frac{c(\cc G(j)}{h-1} \leq \sum_{i=1}^{h-1}\frac{i}{h-1}\mult_{\cc H(j)}(*\alpha^{h-i}) \leq \sum_{i=1}^{h-1}(h-i)\mult_{\cc H(j)}(*\alpha^{h-i}).
\end{equation}
Using \eqref{rjmsum}, we have
\begin{align} \label{rmlong}
    r_jm \left ( 1- \frac{1}{h}\right)& - \frac{1}{h}\sum_{i=1}^{h-1}(h-i)\mult_{\cc H(j)}(*\alpha^{h-i}) \nonumber\\
    =&  \left ( 1- \frac{1}{h}\right)\sum_{i=1}^{h}i\mult_{\cc H(j)}(*\alpha^{h-i}) - \frac{1}{h}\sum_{i=1}^{h-1}(h-i)\mult_{\cc H(j)}(*\alpha^{h-i}) \nonumber\\
    =& \sum_{i=1}^{h-1} \left[i\left ( 1- \frac{1}{h}\right)-\left(1-\frac{i}{h}\right)\right]\mult_{\cc H(j)}(*\alpha^{h-i}) + (h-1)\mult_{\cc H(j)}(*\alpha^{0})\nonumber\\
    =& \sum_{i=1}^{h-1} (i-1)\mult_{\cc H(j)}(*\alpha^{h-i}) + (h-1)\mult_{\cc H(j)}(*\alpha^{0})\nonumber\\
    =& \sum_{i=1}^h (i-1)\mult_{\cc H(j)}(*\alpha^{h-i})\nonumber\\
    =& \sum_{i=1}^{h-1} (h-i)\mult_{\cc H(j)}(*\alpha^{i-1}).
    \end{align}
Moreover, $c(\cc G(j))\leq m$. Now as $n\geq hm$ and  $r_j\geq 2$, we have the following which proves \eqref{wingcondv2}. 
\begin{align*}
    r_jn \left ( 1- \frac{1}{h}\right) - n + m + 1 &\geq hm\left(r_j \left ( 1- \frac{1}{h}\right) - 1\right) + m + 1\\
    &= r_jm \left ( 1- \frac{1}{h}\right) + m\Big ((h-1)r_j - h\Big)\left ( 1- \frac{1}{h}\right) + 1\\
    &\geq r_jm \left ( 1- \frac{1}{h}\right) +m(h-2)\left ( 1- \frac{1}{h}\right) + 1\\
    &\geq r_jm \left ( 1- \frac{1}{h}\right) + \left ( 1- \frac{1}{h(h-1)}\right)m\\
    &\geq r_jm \left ( 1- \frac{1}{h}\right) + \left ( 1- \frac{1}{h(h-1)}\right)c(\cc G(j))\\
    &= r_jm \left ( 1- \frac{1}{h}\right)  - \frac{1}{h}\left(\frac{c(\cc G(j))}{h-1}\right)+c(\cc G(j))\\
    &\geq r_jm \left ( 1- \frac{1}{h}\right) - \frac{1}{h}\sum_{i=1}^{h-1}(h-i)\mult_{\cc H(j)}(*\alpha^{h-i}) + c(\cc G(j))\\
    &= \sum_{i=1}^{h-1}(h-i)\mult_{\cc H(j)}(*\alpha^{i-1}) + c(\cc G(j)).
\end{align*}

\subsection{Connected Detachments} By \cite[Theorem 1.1]{2020arXiv200909674B}, in the fair $(\alpha, n-m)$-detachment $\cc F$ of $\cc H$,  $\cc F(j)$ is connected if and only if
\begin{align}
    \dg_{\cc H(j)}(\alpha)-\omega_\alpha(\cc H(j))\geq n-m-1.
\end{align}
In \eqref{wingcondcor1cam}, we showed that
\begin{equation*}
\omega_\alpha(\cc H(j))\leq (r_j-1)(n-m)+1.
\end{equation*}
Moreover, recall that $\dg_{\cc H(j)}(\alpha)=r_j (n-m)$. Hence, 
\begin{align*}
    \dg_{\cc H(j)}(\alpha)-\omega_\alpha(\cc H(j))\geq r_j (n-m)-(r_j-1)(n-m)-1= n-m-1.
\end{align*}
This completes the proof of Theorem \ref{connthmcomb}.
\section{Proof of Theorem \ref{connthmcomb2}}

Suppose that $n \geq (h-1)(2m-1)$ and $(n,h,\lambda, \vb r)$ is admissible where $\vb{s}=(s_1,\dots,s_q), \vb{r}=(r_1,\dots,r_k)$ such that
$$\sum_{i\in B} \rounddown{\frac{(\overline {r}_i-s_i)m}{h}}+\sum_{i\in [k]\backslash [q]} \rounddown{\frac{\overline {r}_i m}{h}} \geq \lambda\binom{m}{h},$$
where $A\subseteq \{i \in [k]\ |\ r_i\geq 2\}, B=\{i\in [q] \ |\ r_i\neq s_i\}$,   $\overline {r}_i:=r_i-1$ if $i\in A$, and  $\overline {r}_i:=r_i$ if $i\in [k]\backslash A$. 
Assume that a partial $\vb{s}$-factorization of $\mathcal G\subseteq \cc G_1:=\lambda K_m^h$ is given such that $\cc G(i)$ is $r_i$-irregular for $i\in A$.
By Theorem \ref{connthmcomb}, it suffices to extend the given partial $\vb{s}$-factorization of $\mathcal G$ to a partial $\vb r$-factorization of $\cc G_1$ in such a way that no component of color class $i$ of the partial $\vb r$-factorization of $\cc G_1$ is $r_i$-regular for $i \in A$.

Let $\cc H$ be a hypergraph whose vertex set is $\{\alpha\}$ and which has ${\lambda \binom{m}{h}}$ copies of an edge of the form $\{\alpha^h\}$. We color the edges of $\cc H$ such that
\begin{align*}
  \mult_{\cc H(i)}(\alpha^h)\leq
  \begin{cases}
    \rounddown{\dfrac{(r_i-s_i) m}{h}}
    &
    {\text{for}}\ i\in [q]\backslash (A\cap B),
    \\[10pt]
    \rounddown{\dfrac{(r_i-s_i-1) m}{h}}
    &
    {\text{for}}\ i\in A \cap B,
    \\[10pt]
    \rounddown{\dfrac{r_i m}{h}}
    &
    {\text{for}}\ i\in ([k]\backslash[q])\backslash A,
    \\[10pt]
    \rounddown{\dfrac{(r_i-1) m}{h}}
    &
    {\text{for}}\  i\in A\cap ([k]\backslash[q]).
  \end{cases}
\end{align*}
By \cite[Theorem 4.1]{MR2942724}, there exists an $m$-vertex hypergraph $\cc F$ obtained by replacing the vertex $\alpha$ of $\cc H$ by $m$ new vertices  $\alpha_1,\dots,\alpha_m$ in $\cc F$ and replacing each $\alpha^h$-edge by an edge of the form $U$ where $U\subseteq \{\alpha_1,\dots,\alpha_{m}\}, |U|=h$, such that the edges incident with $\alpha$ (in each color class of $\cc H$) are shared as evenly as possible among $\alpha_1,\dots,\alpha_m$ in $\cc F$ in the following way. 
\begin{align*}
        \mult_{\cc F}(U)&=\mult_{\cc H}(\alpha^h)\Big/\binom{m}{h}=\lambda\dbinom{m}{h}\bigg/\dbinom{m}{h}=\lambda  &\forall U\subseteq \{\alpha_1,\dots,\alpha_{n}\}, |U|=h;
\end{align*}
\begin{align*}
 \forall i\in [m]: \dg_{\cc F(j)}(\alpha_i)&\approx\dfrac{\dg_{\cc H(j)}(\alpha)}{m}\leq \dfrac{h \mult_{\cc H(j)}(\alpha^h)}{m} \\
  &\leq \begin{cases}
    r_j-s_j
    &
    {\text{for}}\ j\in [q]\backslash (A\cap B),
    \\
    r_j-s_j-1
    &
    {\text{for}}\ j\in A \cap B,
    \\
    r_j
    &
    {\text{for}}\ j\in ([k]\backslash[q])\backslash A,
    \\
    r_j-1
    &
    {\text{for}}\  j\in A\cap ([k]\backslash[q]).
  \end{cases}
\end{align*}

To obtain a partial  $\vb r$-factorization of $\cc G_1$, we color each edge of $\cc G_1 \backslash \mathcal G$ with the color of the corresponding edge in $\cc F$. Recall that no component of color class $i$ of the  partial $\vb s$-factorization of $\cc G$ is $r_i$-regular for $i\in A$. Hence, $\cc G_1(i)$ is $r_i$-irregular for $i\in A$, and we are done.

\section{More Applications} \label{compappli}
Throughout this section,  
\begin{align*}
    && &n \geq (h-1)(2m-1), && \cc G\subseteq \lambda K_m^h, && \cc F:= \lambda K_n^h,\\
    &&  &c:=\lambda \binom{m-1}{h-1}, && d:=\lambda \binom{n-1}{h-1},  &&g:=\dfrac{h}{\gcd(n,h)}, \\
    && &\vb{s}:=(s_1,\dots,s_q), && \vb{r}:=(r_1,\dots,r_k), && 1\leq s_i\leq r_i \mbox { for } i\in [q],\\
    && &A\subseteq \{i \in [k]\ |\ r_i\geq 2\}, && B:=\{i\in [q] \ |\ r_i\neq s_i\},\\
    && &\overline {r}_i:=r_i-1 \mbox { for }i\in A, &&\overline {r}_i:=r_i  \mbox { for } i\in [k]\backslash A.
\end{align*} 
For $i\in [k]$, let $\delta_i, \overline{\delta}_i$ be the integers with $0\leq \delta_i, \overline{\delta}_i\leq h-1$ such that 
\begin{align*}
 \delta_i \equiv \begin{cases}
   (r_i-s_i)m \Mod h
    &
    {\text{for}}\ i\in [q],
    \\
    r_i m \quad \qquad \Mod h
    &
    {\text{for}}\  i\in [k] \backslash [q],
  \end{cases} \qquad 
\overline{\delta}_i \equiv \begin{cases}
   (\overline{r}_i-s_i)m \Mod h
    &
    {\text{for}}\ i\in [q],
    \\
    \overline{r}_i m \quad \qquad \Mod h
    &
    {\text{for}}\  i\in [k] \backslash [q].
  \end{cases}\end{align*}
If $A=[k], \vb{r}=(r,\dots,r), \vb{s}=(s,\dots,s)$, then  $\Delta_1:=\delta_i$, $\overline {\Delta}_1:=\overline {\delta}_i$ for $i\in [q]$, and $\Delta_2:=\delta_i$, $\overline {\Delta}_2:=\overline {\delta}_i$ for $i\in [k] \backslash [q]$. 

\begin{theorem} \label{restatmenttwoconthm}
\begin{enumerate}
    \item [\textup{(a)}] If 
\begin{align*}
d-c\geq \sum_{i\in [q]} s_i +\frac{1}{m}\sum_{i\in [k]} \delta_i,
\end{align*}
then a partial $\vb{s}$-factorization of $\cc G$ can be embedded into an $\vb{r}$-factorization of $\cc F$ if and only if $(n,h,\lambda, \vb r)$ is admissible. 
\item [\textup{(b)}]  If
\begin{align*} 
    \sum_{i\in B\cup ([k] \backslash [q])} \overline{r}_i -c\geq \sum_{i\in B} s_i +\frac{1}{m}\sum_{i\in B\cup ([k] \backslash [q])} \overline{\delta}_i,
\end{align*}
then for  $i\in A$, $\cc F(i)$ is connected if and only if $\cc G(i)$ is $r_i$-irregular.
\end{enumerate}
\end{theorem}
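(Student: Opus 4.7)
The plan is to derive both parts by invoking Theorems \ref{incompembthm} and \ref{connthmcomb2} respectively, after showing that the inequalities stated in (a) and (b) are algebraic rewrites of the floor-sum hypotheses of those theorems (once admissibility is used). The necessity directions come for free: admissibility of $(n,h,\lambda,\vb r)$ is forced by \eqref{divcond+h1}, and for (b) the observation that a component of $\cc G(i)$ which is already $r_i$-regular cannot be extended to a connected $r_i$-factor was recorded in the opening of the proof of Theorem \ref{connthmcomb}.

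For (a), I would start from the identity $\rounddown{x/h}=(x-(x\bmod h))/h$ and expand the floor sum in Theorem \ref{incompembthm} using the definition of $\delta_i$, obtaining
\begin{align*}
\sum_{i=1}^q \rounddown{\frac{(r_i-s_i)m}{h}} + \sum_{i=q+1}^k \rounddown{\frac{r_i m}{h}} = \frac{m\sum_{i=1}^k r_i - m\sum_{i\in [q]} s_i - \sum_{i\in [k]} \delta_i}{h}.
\end{align*}
Admissibility gives $\sum_{i=1}^k r_i = d$, and $\lambda\binom{m}{h}=mc/h$, so multiplying through by $h/m$ shows that, under admissibility, the floor-sum hypothesis of Theorem \ref{incompembthm} is equivalent to $d - c \geq \sum_{i\in [q]} s_i + \frac{1}{m}\sum_{i\in [k]} \delta_i$. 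Theorem \ref{incompembthm} then supplies the embedding.

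For (b), I would perform the same manipulation with $\overline{r}_i$ in place of $r_i$, on the index sets $B$ and $[k]\backslash[q]$, to get
\begin{align*}
\sum_{i\in B}\rounddown{\frac{(\overline{r}_i-s_i)m}{h}}+\sum_{i\in [k]\backslash [q]}\rounddown{\frac{\overline{r}_i m}{h}} = \frac{m\sum_{i\in B\cup ([k]\backslash [q])}\overline{r}_i - m\sum_{i\in B} s_i - \sum_{i\in B\cup ([k]\backslash [q])}\overline{\delta}_i}{h}.
\end{align*}
Restricting the first sum to $B$ is harmless since $r_i=s_i$ makes the summand zero for $i\in [q]\backslash B$. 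Requiring the right-hand side to be $\geq \lambda\binom{m}{h}=mc/h$ and clearing the factor $m/h$ recovers precisely the inequality of (b); Theorem \ref{connthmcomb2} then yields the connectedness conclusion for $i\in A$. The main obstacle is purely bookkeeping: keeping the index sets $A,B,[q],[k]\backslash[q]$ straight and correctly swapping $r_i$ for $\overline{r}_i$; once the single-term floor identity is in hand, there is no substantive difficulty.
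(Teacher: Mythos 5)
Your proposal is correct and takes essentially the same route as the paper: both reduce (a) and (b) to the floor-sum hypotheses of Theorems \ref{incompembthm} and \ref{connthmcomb2} by expanding $\rounddown{x/h}=(x-(x\bmod h))/h$ via the definitions of $\delta_i,\overline{\delta}_i$, using $\lambda\binom{m}{h}=mc/h$ and admissibility ($\sum_i r_i=d$), and then clearing the factor $m/h$. (Your aside about ``restricting the first sum to $B$'' is unnecessary --- the hypothesis of Theorem \ref{connthmcomb2} is already stated as a sum over $B$ --- but it does not affect the argument.)
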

\begin{proof}
Since
\begin{align*}
    \frac{h}{m}\left(\sum_{i\in [q]} \rounddown{\frac{(r_i-s_i)m}{h}}+\sum_{i\in [k] \backslash [q]} \rounddown{\frac{r_i m}{h}}-\frac{m c}{h} \right)&=\sum_{i\in [k]} r_i -\sum_{i\in [q]} s_i -\frac{1}{m}\sum_{i\in [k]} \delta_i-c  \\
    &= d-c-\sum_{i\in [q]} s_i -\frac{1}{m}\sum_{i\in [k]} \delta_i,
\end{align*}
and
\begin{align*}
    \frac{h}{m}\left(\sum_{i\in B} \rounddown{\frac{(\overline{r}_i-s_i)m}{h}}+\sum_{i\in [k] \backslash [q]} \rounddown{\frac{\overline{r}_im}{h}} -\frac{m c}{h} \right)&=\sum_{i\in B\cup ([k] \backslash [q])} \overline{r}_i -\sum_{i\in B} s_i -\frac{1}{m}\sum_{i\in B\cup ([k] \backslash [q])} \overline{\delta}_i-c, 
\end{align*}
the result follows from Theorems \ref{incompembthm} and \ref{connthmcomb2}.
\end{proof}

In particular, we have the following results.
\begin{theorem} \label{restatmenttwoconthmakappabkapp}
If  $s_i<r_i$ for $i\in [q]$ and 
\begin{align*}
d-c-k\geq \sum_{i\in [q]} s_i +\frac{1}{m}\sum_{i\in [k]} \overline{\delta}_i,
\end{align*}
then a partial $\vb{s}$-factorization of $\cc G$ can be embedded into a connected $\vb{r}$-factorization of $\cc F$  if and only if the following hold.
\begin{enumerate}
    \item [\textup{(i)}] $(n,h,\lambda, \vb r)$ is admissible; 
    \item [\textup{(ii)}] $r_i\geq 2$ for $i\in [k] \backslash [q]$. 
\end{enumerate}
\end{theorem}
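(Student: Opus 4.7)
The plan is to derive this result as a direct application of Theorem \ref{restatmenttwoconthm}(b) with the choice $A=[k]$, so that we demand connectivity of every factor. First I would dispense with necessity: if the embedding exists then $(n,h,\lambda,\vb r)$ must be admissible by \eqref{divcond+h1}, and each connected $r_i$-factor requires $r_i\geq 2$; for $i\in [q]$ this is already forced by the standing hypothesis $1\leq s_i<r_i$, so only the extra clause (ii) needs to be recorded for $i\in [k]\setminus [q]$.

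For sufficiency, the key observation is that under $s_i<r_i$ for all $i\in [q]$ we have $B=[q]$, hence $B\cup([k]\setminus[q])=[k]$. Taking $A=[k]$ gives $\overline{r}_i=r_i-1$ for every $i$, so that by admissibility
\[
\sum_{i\in B\cup([k]\setminus[q])}\overline{r}_i \;=\; \sum_{i\in[k]} r_i - k \;=\; d-k.
\]
Therefore the inequality of Theorem \ref{restatmenttwoconthm}(b) collapses to precisely the hypothesis $d-c-k\geq \sum_{i\in[q]} s_i+\tfrac{1}{m}\sum_{i\in[k]}\overline{\delta}_i$ of the present statement. Applying Theorem \ref{restatmenttwoconthm}(b) thus yields, for each $i\in [k]=A$, that $\cc F(i)$ is connected if and only if $\cc G(i)$ is $r_i$-irregular.

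It remains to certify that $\cc G(i)$ is $r_i$-irregular for every $i\in [k]$, so that the equivalence produced above actually delivers a connected $\vb r$-factorization. For $i\in [q]$, every vertex has degree at most $s_i<r_i$ in $\cc G(i)$, so no component can be $r_i$-regular. For $i\in [k]\setminus [q]$, the given partial $\vb s$-factorization assigns no edge to color $i$, so $\cc G(i)$ consists of $m$ isolated vertices; since $r_i\geq 2$ by (ii), each one-vertex component has degree $0<r_i$ and is not $r_i$-regular. Thus the irregularity hypothesis is automatic, completing the reduction.

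I do not anticipate any serious obstacle: the whole argument is a bookkeeping exercise translating the hypothesis through the admissibility identity $\sum r_i=d$ and observing that the empty color classes are trivially irregular whenever $r_i\geq 2$. The only subtle point to double-check is that the residues $\overline{\delta}_i$ appearing in the statement of (b) indeed coincide with those in the current hypothesis once $A=[k]$ is chosen, which is immediate from the definition $\overline{\delta}_i\equiv (\overline{r}_i-s_i)m$ or $\overline{r}_i m\pmod h$ given in the preamble to Section \ref{compappli}.
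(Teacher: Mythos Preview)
Your proposal is correct and follows essentially the same route as the paper: set $A=[k]$, observe that $s_i<r_i$ forces $B=[q]$ and $r_i\geq 2$ for $i\in[q]$, rewrite $\sum_{i\in[k]}\overline{r}_i=d-k$, and apply Theorem \ref{restatmenttwoconthm}(b). Your write-up is actually a bit more complete than the paper's, since you explicitly handle necessity and verify $r_i$-irregularity of the empty color classes $\cc G(i)$ for $i\in[k]\setminus[q]$, a point the paper leaves implicit.
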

\begin{proof}
Since $s_i<r_i$ for $i\in [q]$, we have $B=[q]$, $r_i\geq 2$ for $i\in [q]$, and that $\cc G(i)$ is $r_i$-irregular  for $i\in [q]$. Now let $A=[k]$. Since 
\begin{align*} 
    \sum_{i\in [k]} \overline{r}_i -c- \sum_{i\in [q]} s_i -\frac{1}{m}\sum_{i\in [k]} \overline{\delta}_i= d-k -c- \sum_{i\in [q]} s_i -\frac{1}{m}\sum_{i\in [k]} \overline{\delta}_i,
\end{align*}
applying the second part of Theorem \ref{restatmenttwoconthm} completes the proof.
\end{proof}

\begin{theorem} \label{restatmenttwoconthmakappabemp}
If 
\begin{align*}
\sum_{i\in [k] \backslash [q]} r_i-c-k+q\geq \frac{1}{m}\sum_{i\in [k] \backslash [q]} \overline{\delta}_i,
\end{align*}
then a partial $\vb{r}$-factorization of $\cc G$ can be embedded into a connected $\vb{r}$-factorization of $\cc F$  if and only if the following hold.
\begin{enumerate}
    \item [\textup{(i)}] $r_i\geq 2$ for $i\in [k]$;
    \item [\textup{(ii)}]  $(n,h,\lambda, \vb r)$ is admissible;
    \item [\textup{(iii)}] $\cc G(i)$ is $r_i$-irregular for $i\in [k]$.
\end{enumerate}
\end{theorem}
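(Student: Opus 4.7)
The plan is to recognize Theorem \ref{restatmenttwoconthmakappabemp} as a specialization of Theorem \ref{restatmenttwoconthm}(b). The given data is a partial $\vb r$-factorization of $\cc G$ that uses at most the first $q$ of the $k$ colors in $\vb r$; I will view this coloring as a partial $\vb{s}$-factorization with $\vb{s}:=(r_1,\dots,r_q)$, so that $s_i=r_i$ for every $i\in [q]$. With this identification, the set $B=\{i\in [q]\ |\ r_i\neq s_i\}$ from the section's standing notation becomes empty. I will also take $A:=[k]$, which is permissible by hypothesis (i) that $r_i\geq 2$ for every $i\in [k]$. Consequently $\overline{r}_i=r_i-1$ and $\overline{\delta}_i\equiv (r_i-1)m \Mod h$ for every $i\in [k]\backslash [q]$.

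Next I will verify that the inequality assumed in the statement is precisely the inequality demanded by Theorem \ref{restatmenttwoconthm}(b). With $B=\emptyset$ and $\overline{r}_i=r_i-1$, that condition reduces to
\begin{align*}
\sum_{i\in [k]\backslash [q]} (r_i-1) - c \geq \frac{1}{m}\sum_{i\in [k]\backslash [q]} \overline{\delta}_i,
\end{align*}
which is an immediate rearrangement of the hypothesis $\sum_{i\in [k]\backslash [q]} r_i - c - k + q \geq (1/m)\sum_{i\in [k]\backslash [q]} \overline{\delta}_i$. Combined with admissibility (ii) and the irregularity assumption (iii) on each $\cc G(i)$ for $i\in A=[k]$, Theorem \ref{restatmenttwoconthm}(b) then yields an $\vb r$-factorization of $\cc F$ extending the given partial coloring in which every $\cc F(i)$ is connected, which is the forward direction.

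For the converse, I will give the standard necessity arguments: admissibility (ii) is forced by the divisibility conditions \eqref{divcond+h1}; an $r_i$-factor on $n\geq (h-1)(2m-1)$ vertices with $r_i\leq 1$ cannot be connected (for $r_i=0$ the factor is edgeless, for $r_i=1$ it is a perfect matching), yielding (i); and if some component of $\cc G(i)$ were $r_i$-regular, its vertices would already carry full $\cc F(i)$-degree and that component would persist intact in $\cc F(i)$, destroying connectivity and yielding (iii). I do not anticipate any real obstacle, since the theorem is essentially an arithmetic repackaging of Theorem \ref{restatmenttwoconthm}(b); the only point to watch is the degenerate case $i\in [k]\backslash [q]$ of (iii), which holds automatically because those color classes of $\cc G$ are empty and, as $r_i\geq 2$ by (i), no isolated-vertex component is $r_i$-regular.
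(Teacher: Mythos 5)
Your proof is correct and takes essentially the same route as the paper: the paper also obtains this theorem by applying Theorem \ref{restatmenttwoconthm}(b) with $A:=[k]$ and $B:=\varnothing$, using the identity $\sum_{i\in [k]\backslash[q]}\overline{r}_i=\sum_{i\in [k]\backslash[q]}r_i-k+q$ to match the hypothesis. The necessity arguments and the observation that (iii) is automatic for the empty color classes $i\in[k]\backslash[q]$ are consistent with what the paper establishes in Theorems \ref{connthmcomb} and \ref{connthmcomb2}.
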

\begin{proof}
Since
\begin{align*}
\sum_{i\in [k] \backslash [q]} \overline{r}_i-c- \frac{1}{m}\sum_{i\in [k] \backslash [q]} \overline{\delta}_i=\sum_{i\in [k] \backslash [q]} r_i-k+q-c- \frac{1}{m}\sum_{i\in [k] \backslash [q]} \overline{\delta}_i,
\end{align*}
applying  the second part of Theorem \ref{restatmenttwoconthm} with $A:=[k], B:=\varnothing$ completes the proof.
\end{proof}

\begin{theorem} \label{rssimpleineqthm}
\begin{enumerate}
    \item [\textup{(a)}] If $$q\leq \dfrac{drm-crm- d \Delta_2}{r(sm+\Delta_1-\Delta_2)},$$ then  a partial $s$-factorization of $\cc G$ can be embedded into an  $r$-factorization of $\cc F$  if and only if $d\equiv 0 \Mod r$ and $rn\equiv 0 \Mod h$. 
    \item [\textup{(b)}] If $s<r$ and
$$q\leq \frac{drm-crm-d\overline{\Delta}_2-dm}{r(sm+\overline{\Delta}_1-\overline{\Delta}_2)},$$ 
then  each  $r$-factor in  $\cc F$ is connected. 
\end{enumerate}
\end{theorem}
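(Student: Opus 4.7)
The plan is to derive both parts as direct specializations of Theorem \ref{restatmenttwoconthm} to the uniform case $\vb{r}=(r,\dots,r)$, $\vb{s}=(s,\dots,s)$, and then verify that the inequalities match after routine algebraic rearrangement. First I would observe that admissibility of $(n,h,\lambda,\vb{r})$ in this uniform setting amounts to two conditions: $h\mid rn$ and $\sum_{i=1}^k r_i = kr = d$, which together are equivalent to $d\equiv 0\Mod r$ and $rn\equiv 0\Mod h$. This takes care of the necessity direction in part (a) and identifies the hidden admissibility requirement in part (b).

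For part (a), I would apply Theorem \ref{restatmenttwoconthm}(a). Under the uniform specialization, $\delta_i=\Delta_1$ for $i\in[q]$ and $\delta_i=\Delta_2$ for $i\in[k]\setminus[q]$, so the hypothesis $d-c\geq \sum_{i\in[q]}s_i + (1/m)\sum_{i\in[k]}\delta_i$ becomes
\[
d-c \;\geq\; qs + \frac{q\Delta_1 + (k-q)\Delta_2}{m}.
\]
Rearranging, $m(d-c) - k\Delta_2 \geq q(sm+\Delta_1-\Delta_2)$, and substituting $k=d/r$ and multiplying by $r$ yields exactly $q(sm+\Delta_1-\Delta_2)\cdot r \leq drm - crm - d\Delta_2$, i.e. the stated bound on $q$. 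So Theorem \ref{restatmenttwoconthm}(a) applies and produces the desired $r$-factorization.

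For part (b), I would apply Theorem \ref{restatmenttwoconthm}(b) with the choices $A=[k]$ and $B=[q]$. The choice $A=[k]$ is legal because $s<r$ and $s\geq 1$ force $r\geq 2$; and $B=[q]$ is forced since $r_i\neq s_i$ for every $i\in[q]$. Then $\overline{r}_i=r-1$ for all $i$, $B\cup([k]\setminus[q])=[k]$, and $\overline\delta_i=\overline\Delta_1$ on $[q]$ and $\overline\Delta_2$ off $[q]$. The hypothesis of Theorem \ref{restatmenttwoconthm}(b) becomes
\[
k(r-1)-c \;\geq\; qs + \frac{q\overline\Delta_1 + (k-q)\overline\Delta_2}{m}.
\]
Using $k=d/r$ to write $k(r-1)=d-d/r$, then clearing denominators by multiplying through by $rm$, gives $rm(d-c)-md - d\overline\Delta_2 \geq rq(sm+\overline\Delta_1-\overline\Delta_2)$, which is exactly the stated $q$-bound. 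Hence Theorem \ref{restatmenttwoconthm}(b) guarantees that each $\cc F(i)$ with $i\in A=[k]$ is connected.

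There is no real obstacle: the entire argument is a substitution check together with confirming that the admissibility and set-theoretic hypotheses of Theorem \ref{restatmenttwoconthm} are satisfied. The only mild bookkeeping point is verifying that $s<r$ suffices to take $A=[k]$ (rather than a proper subset of $\{i:r_i\geq 2\}$) and $B=[q]$; both follow immediately from the assumption.
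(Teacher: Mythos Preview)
Your approach is essentially the paper's: both parts are obtained from Theorem~\ref{restatmenttwoconthm} by specializing to $\vb r=(r,\dots,r)$, $\vb s=(s,\dots,s)$ and rearranging. (For part (b) the paper routes through the intermediate Theorem~\ref{restatmenttwoconthmakappabkapp}, which is itself Theorem~\ref{restatmenttwoconthm}(b) with $A=[k]$, $B=[q]$, so the difference is only cosmetic.)

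Two small points are missing from your write-up. First, the algebraic rearrangements only preserve the direction of the inequalities if $sm+\Delta_1-\Delta_2>0$ and $sm+\overline\Delta_1-\overline\Delta_2>0$; the paper notes these follow from $s\geq 1$ and $m\geq h>\Delta_2,\overline\Delta_2$. Second, Theorem~\ref{restatmenttwoconthm}(b) gives only a biconditional (``$\cc F(i)$ connected iff $\cc G(i)$ is $r_i$-irregular''), so to conclude connectedness in part (b) you must also observe that every $\cc G(i)$ is $r$-irregular: for $i\in[q]$ the maximum degree is at most $s<r$, and for $i>q$ the colour class is empty. Both checks are immediate but should be stated.
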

\begin{proof} Since $s\geq 1, m\geq h \geq \Delta_2+1$, we have $sm+\Delta_1-\Delta_2> 0$. Since $d=kr$, we have
\begin{align*}
rm \left(d-c- \sum_{i\in [q]} s_i -\frac{1}{m}\sum_{i\in [k]} \delta_i\right)&=rm\left( d-c-qs-\frac{q\Delta_1}{m}-\frac{(k-q)\Delta_2}{m} \right)\\
&=drm-crm-d\Delta_2 -qr(sm+\Delta_1-\Delta_2).
\end{align*}
Applying Theorem \ref{restatmenttwoconthm} with $\vb{s}=(s,\dots,s),\vb{r}=(r,\dots,r)$ proves (a). 

Now, assume that $s<r$. We have $sm+\overline{\Delta}_1-\overline{\Delta}_2> 0$ and 
\begin{align*}
rm\left(d-c-k- \sum_{i\in [q]} s_i -\frac{1}{m}\sum_{i\in [k]} \overline{\delta}_i\right)&=rm\left(d-c-\frac{d}{r}- qs -\frac{q\overline{\Delta}_1}{m}-\frac{(k-q)\overline{\Delta}_2}{m}\right)\\
&=drm-crm-dm-d\overline{\Delta}_2-qr(sm+\overline{\Delta}_1-\overline{\Delta}_2).
\end{align*}
Applying Theorem \ref{restatmenttwoconthmakappabkapp} proves (b). 
\end{proof}
\begin{remark}
If $\Delta_1=\Delta_2=0$,  then  the first bound for $q$ in Theorem \ref{rssimpleineqthm} is $q\leq \frac{d-c}{s}$. Similarly, if $\overline{\Delta}_1=\overline{\Delta}_2=0$,  the second bound for $q$ will be $q\leq \frac{d-c}{s}-\frac{d}{rs}$.
\end{remark}

\begin{theorem} \label{last2boundthm}
\begin{enumerate}
    \item [\textup{(a)}] If $$q\leq \dfrac{d}{r}-\frac{cm}{rm-\Delta_2},$$  then  a partial $r$-factorization of $\cc G$ can be embedded into an  $r$-factorization of $\cc F$  if and only if $d\equiv 0 \Mod r$ and $rn\equiv 0 \Mod h$.
    \item [\textup{(b)}] If 
$$q\leq \frac{d}{r}-\frac{cm}{rm-m-\overline{\Delta}_2},$$ 
then  each  $r$-factor in  $\cc F$ is connected  if and only if $r\geq 2$ and each  color class in $\cc G$ are $r$-irregular.
\end{enumerate}
\end{theorem}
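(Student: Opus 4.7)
I would derive both parts by specializing Theorem~\ref{restatmenttwoconthm}(a) (for part (a)) and Theorem~\ref{restatmenttwoconthmakappabemp} (for part (b)) to the uniform case $\vb{s}=\vb{r}=(r,\dots,r)$, and then by an elementary rearrangement rewrite the resulting hypotheses as bounds on $q$.

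For (a), taking $\vb{s}=\vb{r}=(r,\dots,r)$ gives $s_i=r_i=r$ for $i\in[q]$, so $\delta_i\equiv (r_i-s_i)m\equiv 0\pmod h$ for $i\in[q]$, while $\delta_i\equiv rm\pmod h$ for $i\in[k]\setminus[q]$, the common value being $\Delta_2$. Using $d=kr$, the hypothesis of Theorem~\ref{restatmenttwoconthm}(a), namely $d-c\geq \sum_{i\in[q]} s_i + \frac{1}{m}\sum_{i\in[k]} \delta_i$, becomes
\[
d-c \;\geq\; qr + \frac{(k-q)\Delta_2}{m}.
\]
Multiplying by $m$, substituting $k=d/r$, and collecting $q$-terms on one side yields $q(rm-\Delta_2)\leq m(d-c)-(d/r)\Delta_2$. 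Dividing by the positive quantity $rm-\Delta_2$ (strictly positive since $\Delta_2\leq h-1\leq m-1$ and $r\geq 1$) rearranges to exactly $q\leq \frac{d}{r}-\frac{cm}{rm-\Delta_2}$. Hence under the given bound, Theorem~\ref{restatmenttwoconthm}(a) applies and the embedding exists if and only if $(n,h,\lambda,\vb{r})$ is admissible. For $\vb{r}=(r,\dots,r)$, admissibility unpacks to $h\mid rn$ and $\sum_{i=1}^k r_i = kr = d$, i.e.\ $r\mid d$, which is the stated right-hand side.

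For (b), I would take $A=[k]$ (since we demand \emph{every} $r$-factor to be connected), noting that $s_i=r_i$ for $i\in[q]$ forces $B=\varnothing$. Then $\overline{r}_i=r-1$ throughout, and $\overline{\delta}_i\equiv (r-1)m\pmod h$ for $i\in[k]\setminus[q]$, with common value $\overline{\Delta}_2$. The hypothesis of Theorem~\ref{restatmenttwoconthmakappabemp} becomes
\[
(k-q)r - c - k + q \;\geq\; \frac{(k-q)\overline{\Delta}_2}{m}.
\]
Multiplying by $m$, using $kr=d$, and collecting $q$-terms on the left gives $q(rm-m-\overline{\Delta}_2)\leq m(d-c)-(d/r)(m+\overline{\Delta}_2)$, which divided by $rm-m-\overline{\Delta}_2=(r-1)m-\overline{\Delta}_2>0$ (positive because the claimed iff already requires $r\geq 2$, and $\overline{\Delta}_2\leq h-1\leq m-1$) is precisely $q\leq \frac{d}{r}-\frac{cm}{rm-m-\overline{\Delta}_2}$. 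Theorem~\ref{restatmenttwoconthmakappabemp} then delivers the claimed iff with conditions (i) $r\geq 2$, (ii) admissibility, and (iii) each $\cc G(i)$ being $r$-irregular.

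The only real obstacle is the positivity of the two denominators (which must be checked before dividing) and the careful bookkeeping of which summation runs over $[q]$ versus $[k]\setminus[q]$; neither presents a genuine difficulty. Once the arithmetic identification of the two bounds is verified, both parts are immediate corollaries of the previous theorems in this section.
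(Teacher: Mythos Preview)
Your proposal is correct and follows essentially the same route as the paper. For part (b) you apply Theorem~\ref{restatmenttwoconthmakappabemp} with $\vb r=(r,\dots,r)$, exactly as the paper does; for part (a) the paper invokes Theorem~\ref{rssimpleineqthm} with $s=r$ (which forces $\Delta_1=0$) and simplifies the resulting bound, whereas you go one level deeper and specialize Theorem~\ref{restatmenttwoconthm}(a) directly---since Theorem~\ref{rssimpleineqthm} is itself proved from Theorem~\ref{restatmenttwoconthm}, this is the same argument with the intermediate step unwrapped, and your algebra matches.
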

\begin{proof} Using Theorem \ref{rssimpleineqthm} with $r=s$ (and so $\Delta_1=0$), we have
$$\dfrac{drm-crm- d \Delta_2}{r(sm+\Delta_1-\Delta_2)}=\dfrac{d(rm-  \Delta_2)-crm}{r(rm-\Delta_2)}=\frac{d}{r}-\frac{cm}{rm-\Delta_2},$$
which proves (a). 
Applying Theorem \ref{restatmenttwoconthmakappabemp} with $\vb{r}=(r,\dots,r)$, we have
\begin{align*}
rm\left(\sum_{i\in [k] \backslash [q]} r_i-c-k+q- \frac{1}{m}\sum_{i\in [k] \backslash [q]} \overline{\delta}_i\right)&=rm\left((k-q)r-c-k+q-\frac{(k-q)\overline{\Delta}_2}{m}\right)\\
&=rm\left(d-qr-c-\frac{d}{r}+q-\frac{d\overline{\Delta}_2}{rm}+\frac{q \overline{\Delta}_2}{m}\right)\\
&=d\left(rm-m-\overline{\Delta}_2\right)-crm-qr\left(rm-m-\overline{\Delta}_2\right).
\end{align*}
Moreover, $m(r-1)-\overline{\Delta}_2>0$ for $r\geq 2$. This completes the proof of (b)
\end{proof}
\begin{remark}
If $\Delta_2=0$,  then  the first bound for $q$ in Theorem \ref{last2boundthm} will be $q\leq \frac{d-c}{r}$. Similarly, if $\overline{\Delta}_2=0$,  the second bound for $q$ will be $q\leq \frac{d}{r}-\frac{c}{r-1}$.
\end{remark}
\begin{remark} \label{remarkcruse} 
In Theorem \ref{last2boundthm}, let $\lambda=r=1, h=2$ (and so $c=m-1, d=n-1$). Since  $n\geq 2m-1$ and $n$ is even, we have  $n\geq 2m$. If $m$ is even (and so $\Delta_2=0$), we have $$\dfrac{d}{r}-\frac{cm}{rm-\Delta_2}=n-1-\frac{m(m-1)}{m}=n-m\geq m.$$
If $m$ is odd (and so $\Delta_2=1$), we have $$\dfrac{d}{r}-\frac{cm}{rm-\Delta_2}=n-1-\frac{m(m-1)}{m-1}=n-m-1\geq m-1.$$
Thus, a special case of Theorem \ref{last2boundthm} implies another result of Cruse \cite{MR0329925}  that a proper $(m-1)$-coloring of any subgraph of $K_m$ can be extended to a proper $(n-1)$-coloring of $K_n$ whenever $n\geq 2m$. 
\end{remark}

The proof of the next result is immediate from previous results of this section and we shall omit its proof. 
\begin{corollary}
Let $n \geq (h-1)(2m-1)$, $\cc F= \lambda K_n^h$,  $c=\lambda \binom{m-1}{h-1}$, $d=\lambda \binom{n-1}{h-1}$, $g=\dfrac{h}{\gcd(n,h)}$, and let $\cc G$ be an arbitrary sub-hypergraph of $\cc F$. 
We have
\begin{enumerate}[label=\textup{(\Roman*)}]
     \item  A proper $(d-c)$-coloring of $\cc G$  can be extended to a proper $d$-coloring of $\cc F$ whenever $n \equiv m\equiv 0 \Mod h$. 
    \item  A partial $g$-factorization of $\cc G$ using $\dfrac{d}{g}-\roundup{\dfrac{c}{g}}$ colors can be extended to a  $g$-factorization of $\cc F$ whenever  $gm \equiv 0 \Mod h$.
    \item  A partial $g$-factorization of $\cc G$ using $\dfrac{d}{g}-\roundup{\dfrac{c}{g-1}}$ colors can be extended to a connected $g$-factorization of $\cc F$ whenever $n \notequiv 0 \Mod h$, $m(g-1) \equiv 0 \Mod h$, and each color class of $\cc G$ is $g$-irregular. 
    \item  A partial $2$-factorization of $\cc G$ using $\dfrac{d}{2}-c$ colors can be extended to a connected $2$-factorization of $\cc F$ whenever $d$ is even, $m\equiv 2n \equiv 0 \Mod h$, and each color class of $\cc G$ is $2$-irregular. 
   \item  A partial $h$-factorization of $\cc G$ using $\rounddown{\dfrac{d}{h}}-\roundup{\dfrac{c}{h-1}}$ colors can be extended to a connected $h$-factorization of $\cc F$  whenever  $h\geq 2$,  $d \equiv 0 \Mod h$, $m \equiv 0 \Mod h$, and  each color class of $\cc G$ is $h$-irregular.
\end{enumerate}
\end{corollary}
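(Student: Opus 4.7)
The plan is to derive each of (I)--(V) directly from Theorem \ref{last2boundthm}, exploiting the fact that the stated divisibility hypotheses force either $\Delta_2$ or $\overline{\Delta}_2$ to vanish. When this happens, the remark following Theorem \ref{last2boundthm} tells us the bounds on $q$ collapse to $(d-c)/r$ and $d/r-c/(r-1)$ respectively, which the stated color counts are engineered to satisfy via the trivial estimate $\roundup{x}\geq x$. Admissibility of $(n,h,\lambda,\vb r)$ is in each case either handed to us by the hypotheses or automatic from the identity $gn=\mathrm{lcm}(n,h)$; and for (III)--(V) the $r_i$-irregularity of every color class of $\cc G$ is precisely the condition Theorem \ref{connthmcomb} requires for a connected extension.

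Concretely, for (I) I would invoke Theorem \ref{last2boundthm}(a) with $r=s=1$: the hypothesis $m\equiv 0\Mod h$ gives $\Delta_2\equiv m\Mod h=0$, so the bound collapses to $q\leq d-c$, and the remaining admissibility condition is $n\equiv 0\Mod h$. For (II) I would take $r=s=g$ in the same part; $gm\equiv 0\Mod h$ forces $\Delta_2=0$ so the bound reads $q\leq (d-c)/g$, and since $d/g-\roundup{c/g}\leq (d-c)/g$ the stated number of colors fits. Admissibility holds because $gn=\mathrm{lcm}(n,h)$ is divisible by $h$, and $g\mid d$ is implicit in writing the color count $d/g-\roundup{c/g}$.

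For (III)--(V) I would apply Theorem \ref{last2boundthm}(b). In (III) with $r=s=g$, the hypothesis $h\nmid n$ supplies $g\geq 2$ and $m(g-1)\equiv 0\Mod h$ gives $\overline{\Delta}_2=0$, so the bound reduces to $q\leq d/g-c/(g-1)$ which accommodates $d/g-\roundup{c/(g-1)}$; the $g$-irregularity of each color class is the remaining hypothesis invoked through Theorem \ref{connthmcomb}. In (IV) with $r=s=2$ and in (V) with $r=s=h$, the common hypothesis $m\equiv 0\Mod h$ makes $\overline{\Delta}_2=(r-1)m\bmod h=0$, the bound becomes $q\leq d/r-c/(r-1)$, and the stated color count saturates it (using $d\equiv 0\Mod h$ in (V) to ensure $\rounddown{d/h}=d/h$). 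The remaining admissibility conditions are supplied by the hypotheses: $d$ even and $2n\equiv 0\Mod h$ in (IV); $h\mid d$ and the trivial $h\mid hn$ in (V). The main obstacle, insofar as there is one, is purely bookkeeping: identifying the right specialization in each case, verifying that the advertised congruence actually kills the $\Delta$-remainder, and confirming that the ceilings and floors appearing in the stated color counts never overshoot the (generally non-integer) bounds supplied by Theorem \ref{last2boundthm}.
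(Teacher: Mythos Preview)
Your proposal is correct and is essentially the approach the paper has in mind: the paper omits the proof entirely, saying it ``is immediate from previous results of this section,'' and your derivation of each item from Theorem~\ref{last2boundthm} together with the remark on the vanishing of $\Delta_2$ and $\overline{\Delta}_2$ is exactly such an immediate verification. One cosmetic remark: in (III)--(V) the irregularity hypothesis feeds into the argument through Theorem~\ref{last2boundthm}(b) itself (via Theorem~\ref{restatmenttwoconthmakappabemp}), so there is no need to invoke Theorem~\ref{connthmcomb} separately.
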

\begin{remark} \textup{  (I) is an extension of the H\"{a}ggkvist-Hellgren theorem \cite{MR1249714}.  (II) and (III) generalize Baranyai's theorem \cite{MR0416986}. (IV) and (V) can be viewed as hypergraph analogues of Hilton's theorem on embedding path decompositions into Hamiltonian decompositions of complete graphs \cite{MR746544} }.\end{remark}

We conclude this section with a few examples.
\begin{example} \textup{
Any partial 1-factorization of any  sub-hypergraph of $K_8^3$ can be extended to an $r$-factorization of $K_{30}^3$ for each $r\in \{2,7,14,29,58\}$. Any partial 1-factorization of  any  sub-hypergraph of $K_8^4$ can be extended to an $r$-factorization of $K_{45}^4$ for each $r\in \{4,28,44,172,308\}$. Any partial 1-factorization of any  sub-hypergraph of $K_8^5$ can be extended to an $r$-factorization of $K_{60}^5$ for each $r\in \{2, 7, 14, 19, 29\}$. Finally, any partial 1-factorization of any  sub-hypergraph of $K_8^6$ can be extended to an $r$-factorization of $K_{75}^6$ for each $r\in \{2,4, 6, 12, 14\}$. In all these examples, we can ensure that each $r$-factor is connected.
}\end{example}
\begin{example} \label{examp2}\textup{ 
Any partial $6$-factorization of any sub-hypergraph of $K_{10}^3$ using 105 colors can be extended to a $6$-factorization of $K_{38}^3$.  Moreover, any partial $6$-factorization of any sub-hypergraph of $K_{10}^3$ using 103 colors in which each color class is $6$-irregular can be extended to a connected $6$-factorization of $K_{38}^3$. Here,  $k= 111$.
}\end{example}
\begin{example} \textup{ 
Any partial $23$-factorization of any sub-hypergraph of $K_{9}^3$ using 21 colors can be extended to a $24$-factorization of $K_{34}^3$.  Moreover, any partial $23$-factorization of any sub-hypergraph of $K_{9}^3$ using 20 colors can be extended to a connected $24$-factorization of $K_{34}^3$. In this example,  $k= 22$.
}\end{example}

\section{Concluding Remarks and Open Problems}
\begin{itemize}
    \item Using Newton's binomial theorem that for $r\in\mathbb R$ and $-1<x<1$, $$(1+x)^{r}=1+\sum_{i=1}^\infty \frac{r(r-1)\dots(r-i+1)}{i!}x^i,$$
one can slightly improve the bound $n\geq (h-1)(2m-1)$, but a significant improvement requires new ideas. Ryser \cite{MR42361} showed  that a partial 1-factorization of $K_{r,s}$ can be extended to a 1-factorization of $K_{n,n}$ if and only if 
\begin{align} \label{Ryserorigcond}
    E(K_{r,s}(i))\geq r+s-n \quad \forall i\in [n].
\end{align}
This condition is known as Ryser's condition. Unfortunately, the arithmetic obstructions (admissibility conditions) together with metric obstructions (the analogue Ryser-type condition) are not sufficient in our case. Suppose that a partial $\vb r$-factorization of $\lambda K_m^h$ is extended to an $\vb r$-factorization of $\lambda K_n^h$. The number of edges in each $r_i$-factor of $\lambda K_n^h$ is $r_i n/h$. Moreover, each vertex in $V(\lambda K_n^h)\backslash V(\lambda K_m^h)$ is incident with $r_i$ edges in $E(\lambda K_n^h)\backslash E(\lambda K_m^h)$. Therefore, $|E(\lambda K_m^h(i) )|+r_i(n-m)\geq r_i n /h$, and so the following Ryser-type condition is necessary. 
\begin{align} \label{rysergencond}
    |E(\lambda K_m^h(i) )| \geq r_i\left(m- n(1-\frac{1}{h})   \right) \quad \forall i\in [k].
\end{align}
In particular, we must have that $n\geq hm/(h-1)$ if there is at least one empty color class in $\lambda K_m^h$.  Depending on the initial coloring of $\lambda K_m^h$, there may be further necessary conditions. For example, one can also show that if the  coloring  of $\lambda K_m^h$ is an $r$-factorization and it is extended to an $r$-factorization of $\lambda K_n^h$, then $n\geq 2m$ is a necessary condition \cite{MR3910877} (for less trivial necessary conditions, see \cite{bahsadeurjc2021}). This leads us  to ask whether there are other partial $r$-factorizations of $\lambda K_m^h$ that require a stronger bound for $n$. In particular, answering the following question (even for $h=3$) will shed some light.
\begin{question}
Let $h\geq 3, n= \min \{ p > 2m \ |\  p \equiv 0 \Mod h  \}$. Does there exist a proper $\binom{n-1}{h-1}$-coloring of $K_m^h$ that cannot be extended to a proper $\binom{n-1}{h-1}$-coloring of $K_n^h$?
\end{question}

\item With respect to the main inequalities in Theorems \ref{incompembthm} and \ref{connthmcomb2}, it seems reasonable to think that we are doing the best one could hope for.  By example \ref{examp2}, any partial $6$-factorization of any sub-hypergraph of $K_{10}^3$ using 105 colors can be extended to a $6$-factorization of $K_{38}^3$. We conjecture that this is the best possible with respect to the number of colors.
\begin{conjecture}
There exists a  partial $6$-factorization of some sub-hypergraph of $K_{10}^3$ using 106 colors that cannot be extended to a $6$-factorization of $K_{38}^3$. 
\end{conjecture}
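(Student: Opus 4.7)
The plan is to construct an explicit partial $6$-factorization of a suitably chosen $\cc G\subseteq K_{10}^3$ using $106$ colors and show that no extension to a $6$-factorization of $K_{38}^3$ exists. The starting point is the numerical gap at which Theorem~\ref{last2boundthm}(a) just fails to apply: with $(h,m,r,\lambda)=(3,10,6,1)$ one has $d=\binom{37}{2}=666$, $c=\binom{9}{2}=36$, $k=111$, $\Delta_2=0$, so the sufficient bound $q\leq d/r-cm/(rm-\Delta_2)$ reads $q\leq 105$. When $q=106$ only five color classes are unused in $\cc G$, and together they carry at most $5\cdot 20=100$ edges in $K_{10}^3$, a deficit of $20$ against $\binom{10}{3}=120$. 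The conjecture asserts that this numerical deficit can be realized combinatorially.

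By Theorem~\ref{thm:general} the obstruction must live inside $K_{10}^3$: it suffices to exhibit a partial $6$-factorization of $\cc G$ with $106$ non-empty color classes that cannot be completed to a partial $6$-factorization of $K_{10}^3$ using only $111$ colors. I would phrase this as a hypergraph list edge-coloring problem, where each uncolored edge $e$ carries the list of colors in which no vertex of $e$ is saturated to degree $6$, and extension succeeds iff this list coloring can be completed subject to the per-color degree bound $6$. Since the five unused colors lie on every list, the construction must force so many uncolored edges into the five unused classes as to violate either a global edge-count bound or a per-vertex degree-$6$ bound.

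The natural strategy is to choose $\cc G$ so that most uncolored edges are concentrated on a small distinguished vertex set $S\subseteq V(K_{10}^3)$, and to distribute the edges of $\cc G$ among $106$ partial $6$-factors that each saturate at least one vertex of $S$. An elementary count shows that each vertex can be saturated in at most $6$ color classes (its degree in $K_{10}^3$ is $36$), so at most $60$ vertex-saturations are available across $106$ used colors; consequently the crude idea of blocking every used color at every uncolored edge is impossible, and the obstruction must arise from a more global Hall-type or network-flow deficiency in the bipartite incidence between uncolored edges and color classes with remaining capacity.

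The main obstacle is producing a configuration one can verify to be non-extendable. Simple candidates (for example, $\cc G=K_{10}^3$ minus a handful of edges colored greedily) all extend, because the residual $630$-regular $3$-uniform hypergraph on $38$ vertices that results after any partial choice is $6$-factorable whenever an accompanying packing of $6$-factors of $K_{28}^3$ can be produced, and both ingredients are accessible from Theorem~\ref{thm:general}. I would therefore combine an algebraic construction based on a resolvable design on $V(K_{10}^3)$ with a computer-assisted search over moderate-size sub-hypergraphs, encoding the extension question as an integer feasibility program; a positive outcome would confirm the conjecture, while persistent failure would instead suggest that the sufficient condition of Theorem~\ref{incompembthm} is not tight at this boundary and that a refinement should be pursued.
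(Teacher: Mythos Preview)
The statement you are addressing is a \emph{conjecture} in the paper, not a theorem: it appears in the concluding section among open problems, and the paper offers no proof or construction for it. So there is nothing to compare your argument against on the paper's side.

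More importantly, what you have written is not a proof but a research plan. You correctly identify the numerical threshold ($q=106$ is one past the bound $q\le 105$ from Theorem~\ref{last2boundthm}(a)) and you outline a reasonable strategy---concentrate uncolored edges on a small set, saturate vertices so that lists shrink, and look for a Hall-type deficiency---but you never produce a specific $\cc G$ and coloring, and you never verify non-extendability. Your own text acknowledges the central obstacle (``producing a configuration one can verify to be non-extendable'') and then defers to a computer search whose outcome you do not report. You even allow for the possibility that the search fails and the conjecture is false. A proposal that ends with ``run a search; if it works, done; if not, the conjecture may be wrong'' is not a proof.

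There is also a subtle error in your reduction. You assert that by Theorem~\ref{thm:general} ``the obstruction must live inside $K_{10}^3$,'' meaning it suffices to show the partial $\vb s$-factorization of $\cc G$ cannot be completed to a partial $6$-factorization of $K_{10}^3$ using $111$ colors. This is sufficient for non-extendability but not necessary: Theorem~\ref{thm:general} says every partial $6$-factorization of $K_{10}^3$ extends to $K_{38}^3$, but it does not say every extension to $K_{38}^3$ must pass through a partial $6$-factorization of $K_{10}^3$ obtained by coloring the remaining edges of $K_{10}^3$ first. In principle an extension could use edges outside $K_{10}^3$ in ways that do not correspond to any such intermediate coloring. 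For the conjecture this distinction is harmless (you only need one direction), but you should state the logic carefully.
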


\item In our main embedding problem, a hole of size $m$ (edges of $\lambda K_m^h\subseteq \lambda K_n^h$) is colored, and we are entrusted to color the remaining edges of $\lambda K_n^h$. Complementary to this is the case where all the edges of $\lambda K_n^h$ are colored except for a hole of size $m$.
\begin{question}
Find conditions under which  a proper $\binom{n-1}{h-1}$-coloring of $K_n^h\backslash K_m^h$ can be extended to a proper $\binom{n-1}{h-1}$-coloring $K_n^h$. 
\end{question}  
  
\item Evans \cite{MR122728} asked if any proper $n$-coloring of any  $G\subseteq K_{n,n}$ with $|E(G)|=n-1$ can be extended to a proper $n$-coloring of $K_{n,n}$. This became a very popular problem in the 1970s, and Andersen and Hilton \cite{MR716801},    H\"{a}ggkvist \cite{MR519287}, and   Smetaniuk \cite{MR629869}  independently settled Evans' problem. Hence the following problem is natural. 
\begin{question}
For $n\equiv 0 \Mod h$,  what is the largest  $t:=t(n,h)$ such that any proper $\binom{n-1}{h-1}$-coloring of any $\cc G\subseteq K_n^h$ with $|E(\cc G)|=t$ can be extended to a proper $\binom{n-1}{h-1}$-coloring of $K_n^h$? Is it $n/h-1?$
\end{question}
Andersen and Hilton showed that $t(n,2)=n/2-1$ \cite{MR1276827}.

\item Last but not least,  enclosing decompositions \cite{MR3952136, MR3782232, MR3326169, MR2608434}, and highly edge-connected factorizations of complete graphs \cite{MR2325799, MR2071906} have been  studied. In the absence of any corresponding results for hypergraphs, we pose the following problems. 
\begin{question}
For $\mu > \lambda$ find conditions under which  a partial $\vb r$-factorization of $\lambda K_m^h$ can be extended to an $\vb r$-factorization of $\mu K_n^h$. 
\end{question}
\begin{question}
For $\ell \geq 2$ find conditions that ensure  a partial $\vb r$-factorization of $\lambda K_m^h$ can be extended to an $\ell$-edge-connected $\vb r$-factorization of $\lambda K_n^h$. 
\end{question}

\end{itemize}

\bibliographystyle{plain}

\end{document}